\documentclass[a4paper,10pt,oneside]{article}

\usepackage{xcolor}
\definecolor{kcolor}{rgb}{0,0.6,0}


\usepackage{tgpagella}
\usepackage[utf8]{inputenc}
\usepackage{latexsym}
\usepackage{amssymb}
\usepackage{amsmath}


\usepackage{hyperref}

\usepackage{pgf}
\usepackage{xxcolor} 

\usepackage{natbib}

\usepackage{mathtools,amsthm}
\usepackage{color}
\usepackage{tikz}
\usepackage{graphicx}
\usepackage{ifpdf}
\usetikzlibrary{calc}
\usepackage{fp} 
\usepackage{enumitem}


\bibpunct[, ]{(}{)}{,}{a}{}{;}

\setlength{\unitlength}{1cm}
\linespread{1.2}
\setlength{\parskip}{0.2cm plus 0.1cm minus 0.2cm}




\def\qed{\relax
   \ifmmode
    ~\hfill\Box
   \else
    \unskip\nobreak ~\hfill$\square$%
   \fi \par}

\newcommand{\sep}{$\cdot$ }

\theoremstyle{definition} \newtheorem{cor}{Corollary}
\theoremstyle{definition} \newtheorem{conv}{Convention}
\theoremstyle{definition} 
\theoremstyle{definition} 
\theoremstyle{definition} \newtheorem{rem}{Remark}
\theoremstyle{definition} \newtheorem{lemma}{Lemma}

\newcommand\defequiv{\stackrel{\mathclap{\Delta}}{\equiv}}
\newcommand\eqarrowright{\mathrel{\rotatebox[origin=l]{35}{$\scriptstyle\rightarrow$}}}
\newcommand\eqarrowleft{\mathrel{\rotatebox[origin=r]{-35}{$\scriptstyle\leftarrow$}}}
\newcommand\bhrel{\mathrel{\rotatebox[origin=l]{35}{$\scriptstyle\rightarrow$} \hspace{-2pt}\rotatebox[origin=r]{-35}{$\scriptstyle\leftarrow$}}}
\newcommand\intertrans{\mathrel{\rightleftarrows}}
\newcommand\rename{\stackrel{\mathclap{\emptyset}}{\simeq}}
\newcommand\dbhrel{\stackrel{\mathclap{\emptyset}}{\bhrel}}
\newcommand\defis{\stackrel{\mathclap{\normalfont\mbox{\tiny def}}}{=}}

\makeatletter
\newcommand{\BIG}{\bBigg@{2}}
\newcommand{\BIGG}{\bBigg@{3}}
\newcommand{\vast}{\bBigg@{4}}
\newcommand{\Vast}{\bBigg@{5}}
\makeatother

\definecolor{axcolor}{rgb}{.3,0,.3}


\newcommand{\de}{\stackrel{\text{\tiny def}}{=}}

\usepackage{xspace}


\theoremstyle{definition} \newtheorem{thm}{Theorem}
\theoremstyle{definition} 
\theoremstyle{definition} \newtheorem{defn}{Definition}
 



\begin{document}
	\title{{\small -- preprint -- \\ } On  Generalization of Definitional Equivalence to Languages with Non-Disjoint Signatures}
	\author{Koen Lefever \and Gergely Sz{\'e}kely}
	\date{\today}
	\maketitle

\begin{abstract}
For simplicity, most of the literature introduces the concept of definitional equivalence only to languages with disjoint signatures. In a recent paper, Barrett and Halvorson introduce a straightforward generalization to languages with non-disjoint signatures and they show that their generalization is not equivalent to intertranslatability in general. In this paper, we show that their generalization is not transitive and hence it is not an equivalence relation. Then we introduce the Andr\'eka and N\'emeti generalization as one of the many equivalent formulations for languages with disjoint signatures. We show that the Andr\'eka--N\'emeti generalization is the smallest equivalence relation containing the Barrett--Halvorson generalization and it is equivalent to intertranslatability even for languages with non-disjoint signatures. Finally, we investigate which definitions for definitional equivalences remain equivalent when we generalize them for theories with non-disjoint signatures.

	\begin{keywords}
		First-Order Logic \sep Definability Theory \sep Definitional Equivalence \sep Logical Translation \sep Logical Interpretation
	\end{keywords}
\end{abstract}

\section{Introduction}
\label{intro}
Definitional equivalence\footnote{Definitional equivalence has also been called \emph{logical synonymity} or \emph{synonymy}, e.g., in \citep{deBouvere}, \citep{Friedman} and \citep{visser2015}.} has been studied and used by both mathematicians and philosophers of science as a possible criterion to establish the equivalence between different theories. This concept was first introduced by Montague in \citep{Montague}, but there are already some traces of the idea in \citep{Undecidable}. In philosophy of science, it was introduced by Glymour in \citep{Glymour1970}, \citep{Glymour1977} and \citep{Glymour1980}. Corcoran discusses in \citep{corcoran} the history of definitional equivalence. In \citep[Section 6.3]{BigBook} and \citep[Section 4.3]{Judit}, definitional equivalence is generalized to many-sorted definability, where even new entities can be defined and not just new relations between existing entities. \citep{Glymour}, on which the present paper is partly a commentary, and \citep{Morita} contain more references to examples on the use of definitional equivalence in the context of philosophy of science. 

We have also recently started in \citep{ClassRelKin} to use definitional equivalence to study the exact differences and similarities between theories which are \emph{not} equivalent, in that case classical and relativistic kinematics. In that paper, we showed that there exists a translation of relativistic kinematics into classical kinematics, but not the other way round. We also showed that special relativity extended with a ``primitive ether'' is definitionally equivalent to classical kinematics. Those theories are expressed in the same language, and hence have non-disjoint signatures\footnote{For a variant of this result in which we explicitly made the signatures disjoint, see \citep{diss}.}.

Barrett and Halvorson generalize in \citep[Definition 2]{Glymour} definitional equivalence from \citep[pp. 60-61]{Ho93} for languages having non-disjoint vocabularies in a straightforward way. Then they show that their generalization, which we call here \textit{definitional mergeability} to avoid ambiguity, is not equivalent to intertranslatability in general but only for theories with disjoint signatures. In this paper, we show that definitional mergeability is not an equivalence relation because it is not transitive. Then we recall Andr\'eka and N\'emeti's Definition 4.2 from \citep{definability} which is known to be equivalent to definitional mergeability for languages with disjoint signatures. Then we show that the Andr\'eka--N\'emeti definitional equivalence is the smallest equivalence relation containing definitional mergeablitiy and that it is equivalent to intertranslatability even for theories with languages with non-disjoint signatures. Actually, two theories are definitional equivalent iff there is a theory that is definitionally mergeable to both of them. Moreover, one of these definitional mergers can be a renaming.

Theorem 4.2 of \citep{definability} claims that (i) definitional equivalence, (ii) definitional mergeability, (iii) intertranslatability and (iv) model mergeability (see Definition \ref{def-interdisj} below) are equivalent in case of disjoint signatures. Here, we show that the equivalence of (i) and (iii) and that of (ii) and (iv) hold for arbitrary languages, see Theorems \ref{thm:equiv} and \ref{thm:interdisjoined}. However, since (i) and (ii) are not equivalent by Theorems \ref{thm-not-transitive} and \ref{thm:equivrel},  no other equivalence of extends to arbitrary languages. Finally, we introduce a modification of (iv) that is equivalent to (i) and (iii) for arbitrary languages, see Theorem \ref{thm:intertrans}.
\section{Framework and definitions}
\label{sec:1}
\begin{defn}\label{def-signature}
A \textit{signature}\footnote{In \citep{definability}, a \textit{signature} is called a \textit{vocabulary}. Since this paper is partly a comment on \citep{Glymour}, we will use their terminology, which is also being used in \citep{Ho93} and \citep{Ho97}.} $\Sigma$ is a set of predicate symbols (relation symbols), function symbols, and constant symbols. 
\end{defn}

\begin{defn}\label{def-language}
A \textit{first-order language} $\mathcal{L}$ is a set containing a signature, as well as the terms and formulas which can be constructed from that signature using first-order logic. 
\end{defn}

\begin{rem}\label{rem-simp}
For every theory $T$ which might contain constants and functions, there is another theory $T'$ which is formulated in a language containing only relation symbols and connected to $T$ by all the relations investigated in this paper as candidates for definitional equivalence, see \citep[Proposition~2 and Theorem~1]{Glymour}. Therefore,  here we only consider languages containing only relation symbols.
\end{rem}

\begin{defn}\label{def-sentence}
A \textit{sentence} is a formula without free variables. 
\end{defn}

\begin{defn}\label{def-theory}
A \textit{theory} $T$ is a set of sentences expressed in language $\mathcal{L}$. 
\end{defn}

\begin{conv}
We will use the notations $\Sigma_x$, $\Sigma'$, etc. for the signatures, and $\mathcal{L}_x$, $\mathcal{L}'$, etc. for the languages of respective theories $T_x$, $T'$, etc.
\end{conv}

\begin{defn}\label{def-model}
A \textit{model} $\mathfrak{M} = \langle M,\langle R^{\mathfrak{M}}: R\in \Sigma\rangle \rangle$ of signature $\Sigma$ consists of a non-empty underlying set\footnote{The non-empty underlying set $M$ is also called the \textit{universe}, the \textit{carrier} or the \textit{domain} of $\mathfrak{M}$.} $M$, and for all relation symbols $R$ of $\Sigma$, a relation $R^{\mathfrak{M}} \in M^n$ with the corresponding arity\footnote{The \textit{arity} $n$ is the number of variables in the relation, it is also called the \textit{rank}, \textit{degree}, \textit{adicity} or \textit{valency} of the relation. $M^n$ denotes the Cartesian power of set $M$.}.
\end{defn}

\begin{defn}\label{semantics}
Let $\mathfrak{M}$ be a model, let $M$ be the non-empty underlying set of $\mathfrak{M}$, let $\varphi$ be a formula, let $V$ be the set of variables and let $e:V \to M$ be an evaluation of variables, then we inductively define that $e$ \textit{satisfies $\varphi$ in} $\mathfrak{M}$, in symbols $$\mathfrak{M}\models\varphi[e],$$ as:
\begin{enumerate}
\item \label{semantics1} For predicate $R$, $\mathfrak{M}\models R(x,y,\ldots,z)[e]$ holds if $$\big(e(x), e(y), \ldots , e(z)\big)\in R^\mathfrak{M},$$ 
\item \label{semantics2} $\mathfrak{M}\models (x=y)[e]$ holds if $e(x)=e(y)$ holds,
\item \label{semantics3} $\mathfrak{M}\models \neg \varphi[e]$ holds if $\mathfrak{M}\models \varphi[e]$ does not hold,
\item \label{semantics4} $\mathfrak{M}\models (\psi \land \theta)[e]$ holds if both $\mathfrak{M}\models \psi[e]$ and $\mathfrak{M}\models \theta[e]$ hold,
\item \label{semantics5} $\mathfrak{M}\models \big(\exists y \psi\big)[e]$ holds if there is an element $b \in M$, such that $\mathfrak{M}\models \psi[e']$ if $e'(y)=b$ and $e'(x)=e(x)$ if $x\neq y$.
\end{enumerate}
Let $\bar x$ be the list of all free variables of $\varphi$ and let $\bar a$ be a list of elements of $M$ with the same number of elements as $\bar x$. Then $\mathfrak{M}\models \varphi[\bar{a}]$ iff $\mathfrak{M}$ satsfies\footnote{$\mathfrak{M}\models\varphi[\bar{a}]$ can also be read as $\varphi[\bar{a}]$ \textit{being true in} $\mathfrak{M}$.} $\varphi$ for all (or equivalently some) evaluation $e$ of variables for which $e(\bar x)=\bar a$, i.e., variables in $\bar x$ are mapped to elements of $M$ in $\bar a$ in order. In case $\varphi$ is a sentence, its truth does not depend on evaluation of variables. So that $\varphi$ is true in $\mathfrak{M}$ is denoted by $\mathfrak{M}\models\varphi$. For theory $T$, $\mathfrak{M} \models T$ abbreviates that $\mathfrak{M} \models \varphi$ for all $\varphi\in T$.
\end{defn}

\begin{rem}\label{rem-abbr}
We will use $\varphi \lor \psi$ as an abbreviation for $\neg(\neg \varphi \land \neg \psi)$, $\varphi \rightarrow \psi$ for $\neg \varphi \lor \psi$,  $\varphi \leftrightarrow \psi$ for $(\varphi \rightarrow \psi) \land (\psi \rightarrow \varphi)$ and $\forall x(\varphi)$  for $\neg \big(\exists x(\neg \varphi)\big)$. 
\end{rem}

\begin{defn}\label{def-modelth}
$Mod(T)$ is the class of models of theory $T$,
\[Mod(T)\de\{\mathfrak{M}: \mathfrak{M}\models T\}.\]
\end{defn}

\begin{defn}\label{def-theoryequiv}
Two theories $T_1$ and $T_2$ are logically equivalent, in symbols $$T_1\equiv T_2,$$ iff\footnote{\textit{iff} abbreviates \textit{if and only if}. It is denoted by $\leftrightarrow$ in the object languages (see remark \ref{rem-abbr} above) and by $\iff$ in the meta-language.} they have the same class of models, i.e., $Mod(T_1) = Mod(T_2)$. 
\end{defn}

\begin{defn}\label{def-expldef}
Let $\mathcal{L}\subset \mathcal{L}^+$ be two languages.  An \emph{explicit definition} of an $n$-ary relation symbol $p \in \mathcal{L}^+\setminus\mathcal{L}$ in terms of $\mathcal{L}$ is a sentence of the form
\[\forall x_1 \ldots \forall x_n \big[ p(x_1 , \ldots , x_n ) \leftrightarrow \varphi(x_1 , \ldots , x_n )\big],\]
where $\varphi$ is a formula of $\mathcal{L}$. 
\end{defn}

\begin{defn}\label{def-extension}
A \emph{definitional extension}\footnote{We follow the definition from \citep[Section 4.1, p.36]{definability}, \citep[p.60]{Ho93} and \citep[p.53]{Ho97}. In \citep[Section 3.1]{Glymour}, the logical equivalence relation is not part of the definition.} of a theory $T$ of language $\mathcal{L}$ to language $\mathcal{L}^+$ is a theory $T^+ \equiv T \cup \Delta$, where $\Delta$ is a set of explicit definitions in terms of language $\mathcal{L}$ for each relation symbol $p\in \mathcal{L}^+\setminus\mathcal{L}$.
In this paper, $$T \eqarrowright T^+ \text{ and } T^+ \eqarrowleft T$$ denote that $T^+$ is a definitional extension of $T$.
\end{defn}

We will use $\Delta_{xy}$ to denote the set of explicit definitions when the signature $\Sigma_y$ of theory $T_y$ is defined in terms of the signature $\Sigma_x$ of theory $T_x$.

\begin{defn}\label{def-defeq}
Two theories $T$, $T'$ are \emph{definitionally equivalent}, in symbols $$T \defequiv T',$$ if there is a chain $T_1,\ldots, T_n$ of theories such that $T=T_1$, $T'=T_n$, and for all $1 \leq i < n$ either $T_i \eqarrowright T_{i+1}$ or $T_i \eqarrowleft T_{i+1}$.
\end{defn}

\begin{rem}\label{rem-inconsistent}
If a theory is consistent, then all theories which are definitionally equivalent to  that theory are also consistent since definitions cannot make consistent theories inconsistent. Similarly, if a theory is inconsistent, then all theories which are definitionally equivalent to that theory are also inconsistent.
\end{rem}

\begin{defn}\label{def-bhrel}
Let $T_1$ and $T_2$ be theories of languages $\mathcal{L}_1$ and $\mathcal{L}_2$, respectively. $T_1$ and $T_2$ are \emph{definitionally mergeable}, in symbols $$T_1 \bhrel T_2,$$ if there is a theory $T^+$ which is a common definitional extension of $T_1$ and $T_2$, i.e., $T_1 \eqarrowright T^+ \eqarrowleft T_2$.
\end{defn}

\begin{rem}\label{rem:special}
From Definition~\ref{def-defeq} and Definition~\ref{def-bhrel}, it is immediately clear that being definitionally mergeable is a special case of being definitionally equivalent.
\end{rem}

Lemma \ref{lem-bhdef} below establishes that our Definition \ref{def-bhrel} of definitional mergeability is equivalent to the definition for definitional equivalence in \citep[Definition 2]{Glymour}.

\begin{lemma}\label{lem-bhdef}
Let $T_1$ and $T_2$ be two arbitrary theories. Then $T_1 \bhrel T_2$ iff there are sets of explicit definitions $\Delta_{12}$ and $\Delta_{21}$ such that $T_{1}\cup \Delta_{12}\equiv T_{2}\cup\Delta_{21}$. 
\end{lemma}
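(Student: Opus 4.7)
The plan is to prove both directions separately. The backward direction is essentially unpacking Definition~\ref{def-extension}: I would take $T^+\defis T_1\cup\Delta_{12}$; since by hypothesis $T^+\equiv T_2\cup\Delta_{21}$, this very theory is simultaneously a definitional extension of $T_1$ (via $\Delta_{12}$) and of $T_2$ (via $\Delta_{21}$), giving $T_1\eqarrowright T^+\eqarrowleft T_2$ at once.

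For the forward direction, assume $T_1\eqarrowright T^+\eqarrowleft T_2$, so $T^+\equiv T_1\cup\Delta_1^+\equiv T_2\cup\Delta_2^+$ in some ambient language $\mathcal{L}^+\supseteq\mathcal{L}_1\cup\mathcal{L}_2$, where $\Delta_1^+$ explicitly defines every symbol of $\mathcal{L}^+\setminus\mathcal{L}_1$ in terms of $\mathcal{L}_1$ (and dually $\Delta_2^+$ in terms of $\mathcal{L}_2$). I would then let $\Delta_{12}\subseteq\Delta_1^+$ consist of those definitions whose defined symbol lies in $\mathcal{L}_2\setminus\mathcal{L}_1$, and symmetrically $\Delta_{21}\subseteq\Delta_2^+$. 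All sentences involved now live in the sublanguage $\mathcal{L}_1\cup\mathcal{L}_2$, and the target becomes $T_1\cup\Delta_{12}\equiv T_2\cup\Delta_{21}$.

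I would establish this equivalence semantically. Given an $\mathcal{L}_1\cup\mathcal{L}_2$-model $\mathfrak{M}\models T_1\cup\Delta_{12}$, I would expand it to an $\mathcal{L}^+$-model $\mathfrak{M}^+$ by interpreting each symbol $p\in\mathcal{L}^+\setminus(\mathcal{L}_1\cup\mathcal{L}_2)$ using its explicit definition in $\Delta_1^+$; this is well-defined because those definitions are $\mathcal{L}_1$-formulas and hence evaluable in $\mathfrak{M}$. By construction $\mathfrak{M}^+\models T_1\cup\Delta_1^+\equiv T^+\equiv T_2\cup\Delta_2^+$, so in particular $\mathfrak{M}^+\models T_2\cup\Delta_{21}$; since only symbols of $\mathcal{L}_1\cup\mathcal{L}_2$ occur there, reducing the signature back gives $\mathfrak{M}\models T_2\cup\Delta_{21}$, and the converse inclusion is symmetric. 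The main obstacle, as I see it, is precisely this language-size gap: naively restricting $\Delta_1^+$ to $\Delta_{12}$ seems to discard the information carried by the definitions of the auxiliary symbols in $\mathcal{L}^+\setminus(\mathcal{L}_1\cup\mathcal{L}_2)$, and the canonical expansion $\mathfrak{M}\mapsto\mathfrak{M}^+$ along those discarded definitions is the device that closes the gap and keeps $\mathfrak{M}$ inside the model class of $T^+$.
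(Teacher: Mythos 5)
Your proof is correct, and in the forward direction it takes a genuinely more elaborate route than the paper's. The backward direction is essentially the paper's argument (the paper sets $T^+ = T_1\cup T_2\cup\Delta_{12}\cup\Delta_{21}$ rather than your $T_1\cup\Delta_{12}$, but under the hypothesis these are logically equivalent, so the difference is cosmetic). For the forward direction, the paper does not descend to the sublanguage $\mathcal{L}_1\cup\mathcal{L}_2$ at all: it simply takes $\Delta_{12}$ and $\Delta_{21}$ to be the \emph{entire} sets of definitions witnessing $T_1\eqarrowright T^+\eqarrowleft T_2$, i.e., definitions of every symbol of $\mathcal{L}^+\setminus\mathcal{L}_1$, respectively $\mathcal{L}^+\setminus\mathcal{L}_2$, and concludes $T_1\cup\Delta_{12}\equiv T^+\equiv T_2\cup\Delta_{21}$ by transitivity of $\equiv$. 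That two-line argument already proves the lemma as literally stated, since nothing in the statement forces $\Delta_{12}$ to define only the symbols of $\Sigma_2\setminus\Sigma_1$; the ``language-size gap'' you identify as the main obstacle is therefore not an obstacle to the lemma itself. What your extra step buys --- pruning to $\Delta_{12}\subseteq\Delta_1^+$ and recovering membership in $Mod(T^+)$ by canonically expanding a model along the discarded definitions --- is a strictly stronger conclusion: witnesses living entirely in $\mathcal{L}_1\cup\mathcal{L}_2$ that conform to the paper's convention that $\Delta_{xy}$ defines $\Sigma_y$ in terms of $\Sigma_x$, which is what one needs to match Barrett--Halvorson's Definition 2 exactly (the stated purpose of the lemma, on which the paper's own proof is silent). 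The cost is the semantic expansion argument, which is routine but not free; the paper's proof is shorter precisely because it never discards the auxiliary definitions.
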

\begin{proof}
Let $T_1 \bhrel T_2$, then there exists a $T^+$ such that $T_{1} \eqarrowright T^+ \eqarrowleft T_{2}$. By the definition of definitional extension, there exist sets of explicit definitions $\Delta_{12}$ and $\Delta_{21}$ such that $T_{1}\cup \Delta_{12} \equiv T^+$ and $T_{2}\cup\Delta_{21} \equiv T^+$, and hence by transitivity of logical equivalence $T_{1}\cup \Delta_{12}\equiv T_{2}\cup\Delta_{21}$.

\begin{sloppypar}
To prove the other direction: let $T_1$ and $T_2$ be theories such that ${T_{1} \cup \Delta_{12} \equiv T_{2}\cup\Delta_{21}}$ for some sets $\Delta_{12}$ and $\Delta_{21}$ of explicit definitions. Let $T^+ = T_1 \cup T_2 \cup \Delta_{12} \cup \Delta_{21}$. Hence $T_1 \cup \Delta_{12} \equiv T^+ \equiv T_2\cup \Delta_{21}$ and $T_{1} \eqarrowright T^+ \eqarrowleft T_{2}$, and therefore $T_1\bhrel T_2$.\qedhere
\end{sloppypar}
\end{proof}

\begin{conv}
If theories $T_1$ and $T_2$ are definitionally mergeable and their signatures are disjoint, i.e., $\Sigma_1 \cap \Sigma_2 = \emptyset$, we write $$T_1 \dbhrel T_2.$$
\end{conv}

\begin{defn}\label{def-interdisj} Theories $T_1$ and $T_2$ are \emph{model mergeable}\footnote{We use the definition from \citep[p. 40, item iv]{definability}, which is a variant of the definition in \citep[p. 56, Remark 0.1.6]{HMT71}.}, in symbols $$Mod(T_1) \bhrel Mod(T_2),$$ iff there is a bijection $\beta$ between $Mod(T_1)$ and $Mod(T_2)$ that is defined along two sets $\Delta_{12}$ and $\Delta_{21}$  of explicit definitions such that if $\mathfrak{M}\in Mod(T_1)$, then
\begin{itemize}
\item  the underlying sets of $\mathfrak{M}$ and $\beta(\mathfrak{M})$ are the same,
\item  the relations in $\beta(\mathfrak{M})$ are the ones defined in $\mathfrak{M}$ according to $\Delta_{12}$ and vice versa, the relations in $\mathfrak{M}$ are the ones defined in $\beta(\mathfrak{M})$ according to $\Delta_{21}$.
\end{itemize}
\end{defn}

\begin{defn}\label{def-translation}
Let $T_1$ and $T_2$ be theories. 
A \emph{translation}\footnote{In \citep{definability}, \citep{diss} and \citep{ClassRelKin}, this is called an \emph{interpretation}, but we again follow the terminology from \citep{Glymour} here.} $tr$ of theory $T_1$ to theory $T_2$ is a map from $\mathcal{L}_1$ to $\mathcal{L}_2$ which 
\begin{sloppypar}
\begin{itemize}
\item maps every $n$-ary relation symbol $p\in \mathcal{L}_1$ to a corresponding formula ${\varphi_p\in\mathcal{L}_2}$ of $n$ with free variables, i.e., $tr\big(p(x_1,\ldots,x_n)\big)$ is $\varphi_p(x_1,\ldots,x_n)$.   
\item preserves the equality, logical connectives, and quantifiers, i.e.,  
\begin{itemize}
\item $tr(x_1=x_2)$ is $x_1=x_2$, 
\item $tr(\neg\varphi)$ is $\neg tr(\varphi)$, 
\item $tr(\varphi\land\psi)$ is $tr(\varphi)\land tr(\psi)$, and 
\item $tr(\exists x \varphi)$ is $\exists x \big(tr(\varphi)\big)$.
\end{itemize}
\item maps consequences of $T_1$ into consequences of $T_2$, i.e., $T_1 \models \varphi$ implies $T_2\models tr(\varphi)$ for all sentence $\varphi\in \mathcal{L}_1$. 
\end{itemize}
\end{sloppypar}
\end{defn}

\begin{rem}\label{mutual}
From \citep{AMNmutual}, we know that $T$ being translatable into $T'$ and $T'$ being translatable into $T$ is not a sufficient condition for $T \defequiv T'$. 
\end{rem}

\begin{defn}\label{def-intertrans}
Theories $T_1$ and $T_2$ are \emph{intertranslatable}\footnote{In \citep[p. 167, Definition 4.3.42]{HMT85}, definitional equivalence is defined as intertranslatability.}, in symbols $$T_1 \intertrans T_2,$$  if there are translations $tr_{12}$ of $T_1$ to $T_2$ and $tr_{21}$ of $T_2$ to $T_1$ such that 
\begin{itemize}
\item $T_1 \models \forall x_1 \ldots \forall x_n \big[\varphi(x_1 , \ldots , x_n ) \leftrightarrow tr_{21}\big(tr_{12} \big(\varphi(x_1 ,\ldots , x_n )\big)\big)\big]$
\item $T_2 \models \forall x_1 \ldots \forall x_n \big[\psi(x_1 , \ldots , x_n ) \leftrightarrow tr_{21}\big(tr_{12}\big(\psi(x_1 ,\ldots , x_n )\big)\big)\big]$
\end{itemize}
for every formulas $\varphi(x_1 ,\ldots , x_n )$ and formula $\psi(x_1 ,\ldots , x_n )$ of languages  $\mathcal{L}_1$ and $\mathcal{L}_2$, respectively.
\end{defn}

For a direct proof that intertranslatability is an equivalence relation, see e.g., \citep[Theorem 1, p. 7]{diss}. This fact also follows from Theorems \ref{thm:equivrel} and \ref{thm:equiv} below.

\begin{defn}\label{def-satisfy}
The \textit{relation defined by formula $\varphi$ in $\mathfrak{M}$} is\footnote{$\|\varphi\|^{\mathfrak{M}}$ is basically the same as the \textit{meaning} of formula $\varphi$ in model $\mathfrak{M}$, see \citep[p. 194 Definition 34 and p. 231 Example 8]{HBPL2}.}:
$$\|\varphi \|^{\mathfrak{M}} \defis \big\{\bar a \in M^n : \mathfrak{M}\models \varphi[\bar{a}]\big\}.$$
\end{defn}

\begin{defn}\label{def-transmod}
For all translations $tr_{12}:\mathcal{L}_1\to\mathcal{L}_2$  of theory $T_1$ to theory $T_2$, let $tr^*_{12}$ be defined as the map that maps model $\mathfrak{M}=\langle M,\ldots\rangle$ of $T_2$ to
$$ tr^*_{12}(\mathfrak{M}) \defis \Big\langle M,\left\langle\|tr_{12}(p_i)\|^{\mathfrak{M}}:p_i\in \Sigma_1\right\rangle\Big\rangle,$$
that is all predicates $p_i$ of $\Sigma_1$ interpreted in model $tr^*_{12}(\mathfrak{M})$ as the relation defined by formula $tr_{12}(p_i)$.
\end{defn}

\begin{lemma}\label{lem-transmod}
Let $\mathfrak{M}$ be a model of language $\mathcal{L}_2$, let $\varphi$ be a formula of language $\mathcal{L}_1$, and let $e:V\to M$ be an evaluation of variables. If $tr_{12}:\mathcal{L}_1\to\mathcal{L}_2$  is translation of $T_1$ to $T_2$, then
$$tr^*_{12}(\mathfrak{M})\models \varphi[e] \iff \mathfrak{M}\models tr_{12}(\varphi)[e]$$
\end{lemma}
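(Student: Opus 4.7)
The plan is a routine structural induction on the complexity of the formula $\varphi\in\mathcal{L}_1$, exploiting the fact that, by Definition \ref{def-transmod}, the models $\mathfrak{M}$ and $tr^*_{12}(\mathfrak{M})$ share the same underlying set $M$ so any evaluation $e:V\to M$ makes sense in both. Note that the consequence-preservation clause in Definition \ref{def-translation} is not needed for this lemma; only the clauses describing how $tr_{12}$ acts on atomic formulas, equality, connectives, and quantifiers are used.

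For the base cases, first take $\varphi$ to be an atomic formula $p(x_1,\ldots,x_n)$ with $p\in\Sigma_1$. Unwinding definitions,
\[
tr^*_{12}(\mathfrak{M})\models p(x_1,\ldots,x_n)[e]\iff \big(e(x_1),\ldots,e(x_n)\big)\in p^{tr^*_{12}(\mathfrak{M})}=\|tr_{12}(p(x_1,\ldots,x_n))\|^{\mathfrak{M}},
\]
which by Definition \ref{def-satisfy} is exactly $\mathfrak{M}\models tr_{12}(p(x_1,\ldots,x_n))[e]$. For $\varphi$ of the form $x_1=x_2$, both sides reduce to $e(x_1)=e(x_2)$ using item \ref{semantics2} of Definition \ref{semantics} together with the fact that $tr_{12}(x_1=x_2)$ is $x_1=x_2$.

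For the inductive step, I would assume the equivalence holds for $\psi$ and $\theta$ and verify it for $\neg\psi$ and $\psi\land\theta$ by chaining the induction hypothesis with items \ref{semantics3} and \ref{semantics4} of Definition \ref{semantics} and the clauses $tr_{12}(\neg\psi)=\neg tr_{12}(\psi)$ and $tr_{12}(\psi\land\theta)=tr_{12}(\psi)\land tr_{12}(\theta)$ from Definition \ref{def-translation}. The only step that requires genuine bookkeeping is the existential case $\varphi=\exists y\,\psi$: here one observes that, because $tr^*_{12}(\mathfrak{M})$ and $\mathfrak{M}$ have the same universe $M$, the class of evaluations $e'$ differing from $e$ only at $y$ is literally the same on both sides. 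Applying item \ref{semantics5} of Definition \ref{semantics} to both sides and then the induction hypothesis to $\psi$ under each such $e'$, together with $tr_{12}(\exists y\,\psi)=\exists y\,(tr_{12}(\psi))$, yields the desired equivalence.

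The main (and essentially only) obstacle is the quantifier case, as always in such Tarski-style inductions; but it is purely clerical once one notices the shared universe. The remaining logical connectives and quantifiers could be handled via Remark \ref{rem-abbr} once $\neg$, $\land$ and $\exists$ are established.
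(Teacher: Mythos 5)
Your proposal is correct and follows essentially the same route as the paper's own proof: a Tarski-style induction on the complexity of $\varphi$, with the atomic and equality base cases unwound via Definitions \ref{def-transmod} and \ref{def-satisfy}, and the $\neg$, $\land$, $\exists$ cases handled by the preservation clauses of Definition \ref{def-translation} together with the shared underlying set $M$. Your observation that the consequence-preservation clause of Definition \ref{def-translation} is not needed here is accurate and a nice touch, but the argument is otherwise the same.
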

\begin{proof}
We are going to prove Lemma~\ref{lem-transmod} by induction on the complexity of $\varphi$. So let us first assume that $\varphi$ is a single predicate $p$ of language $\mathcal{L}_1$.

Let $\bar u$ be the $e$-image of the free variables of $p$. Then $tr^*_{12}(\mathfrak{M})\models p[e] $ holds exactly if $tr^*_{12}(\mathfrak{M})\models p[\bar{u}]$. By Definition \ref{def-transmod}, this holds iff
\begin{equation}\label{eq-trmod}\Big\langle M,\left\langle\|tr_{12}(p_i)\|^{\mathfrak{M}}:p_i\in \Sigma_1\right\rangle\Big\rangle \models p[\bar{u}].
\end{equation}
By Definition \ref{def-satisfy}, $\|tr_{12}(p)\|^{\mathfrak{M}} = \big\{\bar a \in M^n : \mathfrak{M}\models tr_{12}(p)[\bar{a}]\big\}$. So \eqref{eq-trmod} is equivalent to $\mathfrak{M}\models tr_{12}(p)[\bar u]$.

 If $\varphi$ is $x = y$, then we should show that $$tr^*_{12}(\mathfrak{M})\models (x = y)[e] \iff \mathfrak{M}\models tr_{12}(x = y)[e].$$
Since translations preserve mathematical equality by Definition \ref{def-translation}, this is equivalent to $$tr^*_{12}(\mathfrak{M})\models (x = y)[e] \iff \mathfrak{M}\models (x = y)[e],$$ which holds because the underlying sets of $tr^*_{12}(\mathfrak{M})$ and $\mathfrak{M}$ are the same and both sides of the equivalence are equivalent to $e(x)=e(y)$ by Definition \ref{semantics}.

Let us now prove the more complex cases by induction on the complexity of formulas. 
\begin{itemize}
\item If $\varphi$ is $\neg \psi$, then we should show that $$tr^*_{12}(\mathfrak{M})\models \neg \psi[e]  \iff \mathfrak{M}\models tr_{12}(\neg \psi)[e].$$  Since $tr_{12}$ is a translation, it preserves (by Definition \ref{def-translation}) the conectives, and therefore this is equivalent to $$tr^*_{12}(\mathfrak{M})\models \neg \psi[e] \iff \mathfrak{M}\models \neg tr_{12}(\psi)[e],$$
which holds by Definition \ref{semantics} Item \ref{semantics3} since we have  
$$tr^*_{12}(\mathfrak{M})\models \psi[e] \iff \mathfrak{M}\models tr_{12}(\psi)[e]$$ by induction.
\item If $\varphi$ is $(\psi \land \theta)$, then we should show that $$tr^*_{12}(\mathfrak{M})\models (\psi \land \theta)[e] \iff \mathfrak{M}\models tr_{12}(\psi \land \theta)[e].$$ Since $tr_{12}$ is a translation, it preserves (by Definition \ref{def-translation}) the conectives, and therefore $tr_{12}(\psi \land \theta)$ is equivalent to $tr_{12}(\psi) \land tr_{12}(\theta)$, and hence the above is equivalent to $$tr^*_{12}(\mathfrak{M})\models (\psi \land \theta)[e] \iff \mathfrak{M}\models \big(tr_{12}(\psi) \land tr_{12}(\theta)\big)[e],$$ which holds by Definition \ref{semantics} Item \ref{semantics4} because both 
$${tr^*_{12}(\mathfrak{M})\models \psi[e] \iff \mathfrak{M}\models tr_{12}(\psi)[e]}$$ and $${tr^*_{12}(\mathfrak{M})\models \theta[e] \iff \mathfrak{M}\models tr_{12}(\theta)[e]}$$ 
hold by induction.
\item If $\varphi$ is $\exists y(\psi)$, then we should show that  $$tr^*_{12}(\mathfrak{M})\models \big(\exists y(\psi)\big)[e] \iff \mathfrak{M}\models tr_{12}\big(\exists y(\psi)\big)[e]$$ holds. Since $tr_{12}$ is a translation, it preserves (by Definition \ref{def-translation}) the quantifiers, and hence this is equivalent to $$tr^*_{12}(\mathfrak{M})\models \big(\exists y(\psi)\big)[e] \iff \mathfrak{M}\models \big(\exists y \big(tr_{12}(\psi)\big)\big)[e].$$
By Definition \ref{semantics} Item \ref{semantics5}, both sides of he equivalence hold exactly if there exists an element $b \in M$ such that
$$tr^*_{12}(\mathfrak{M})\models \psi[e'] \iff \mathfrak{M}\models tr_{12}(\psi)[e'],$$
where $e'(y)=b$ and $e'(x)=e(x)$ if $x\neq y$, which holds by induction because the underlying sets of $tr^*_{12}(\mathfrak{M})$ and $\mathfrak{M}$ are the same.\qedhere
\end{itemize}
\end{proof}

\begin{cor}\label{cor:transmod}
If $tr_{12}:\mathcal{L}_1\to\mathcal{L}_2$  is a translation of $T_1$ to $T_2$, then 
$$tr^*_{12}:Mod(T_2)\to Mod(T_1),$$ 
that is, $tr^*_{12}$  is a map from $Mod(T_2)$ to $Mod(T_1)$.
\end{cor}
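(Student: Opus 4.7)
The plan is to fix an arbitrary $\mathfrak{M}\in Mod(T_2)$ and verify that $tr^*_{12}(\mathfrak{M})$ satisfies every sentence of $T_1$. First I would note that, by Definition~\ref{def-transmod}, $tr^*_{12}(\mathfrak{M})$ is already a structure of signature $\Sigma_1$: its underlying set is the universe $M$ of $\mathfrak{M}$, and each $n$-ary predicate symbol $p_i\in\Sigma_1$ is interpreted by the $n$-ary relation $\|tr_{12}(p_i)\|^{\mathfrak{M}}\subseteq M^n$. So the only thing left to check is satisfaction of the axioms of $T_1$.

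Next, let $\varphi$ be an arbitrary sentence of $T_1$. Since $\varphi$ has no free variables, satisfaction is independent of the evaluation, so Lemma~\ref{lem-transmod} gives
\[tr^*_{12}(\mathfrak{M})\models\varphi \iff \mathfrak{M}\models tr_{12}(\varphi).\]
Because $tr_{12}$ is a translation of $T_1$ to $T_2$, the last clause of Definition~\ref{def-translation} guarantees that $T_1\models\varphi$ implies $T_2\models tr_{12}(\varphi)$. Trivially $T_1\models\varphi$ for every $\varphi\in T_1$, and since $\mathfrak{M}\in Mod(T_2)$, we conclude $\mathfrak{M}\models tr_{12}(\varphi)$, and hence $tr^*_{12}(\mathfrak{M})\models\varphi$. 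As $\varphi\in T_1$ was arbitrary, $tr^*_{12}(\mathfrak{M})\models T_1$, i.e., $tr^*_{12}(\mathfrak{M})\in Mod(T_1)$.

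I do not foresee a real obstacle here: Lemma~\ref{lem-transmod} has already done the inductive work connecting satisfaction in $tr^*_{12}(\mathfrak{M})$ with satisfaction of the translation in $\mathfrak{M}$, and the consequence-preservation clause of Definition~\ref{def-translation} converts this into the desired preservation of $T_1$-axioms. The only point that deserves explicit mention is that $\varphi$ is a sentence, so the appeal to Lemma~\ref{lem-transmod} does not depend on any particular choice of evaluation.
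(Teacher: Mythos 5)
Your proof is correct and follows essentially the same route as the paper's: reduce $tr^*_{12}(\mathfrak{M})\models\varphi$ to $\mathfrak{M}\models tr_{12}(\varphi)$ via Lemma~\ref{lem-transmod}, then invoke the consequence-preservation clause of Definition~\ref{def-translation}. Your write-up is in fact slightly more explicit than the paper's (which leaves the appeal to consequence-preservation implicit), but there is no substantive difference.
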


\begin{proof}
\begin{sloppypar}
Let $\mathfrak{M}$ be a model of $T_2$ and let $\varphi\in T_1$. We should prove that ${tr^*_{12}(\mathfrak{M})\models \varphi}$. By Lemma~\ref{lem-transmod}, we have that
$$tr^*_{12}(\mathfrak{M})\models \varphi \iff \mathfrak{M}\models tr_{12}(\varphi).$$ 
Hence $tr_{12}(\varphi)$ is true in every model of $T_2$ as we wanted to prove.\qedhere
\end{sloppypar}
\end{proof}

\begin{rem}\label{rem-otherwayround}
Note that while $tr_{12}$ is a translation of $T_1$ to $T_2$, $tr^*_{12}$ translates models the other way round from $Mod(T_2)$ to $Mod(T_1)$. For an example illustrating this for a translation from relativistic kinematics to classical kinematics, see \citep[Chapter 7]{diss} or \citep[Section 7]{ClassRelKin}. 
\end{rem}

\begin{defn}\label{def-intermod} Theories $T_1$ and $T_2$ are \emph{model intertranslatable}, in symbols $$Mod(T_1)\rightleftarrows Mod(T_2),$$ iff there are translations $tr_{12}:\mathcal{L}_1\to\mathcal{L}_2$ of $T_1$ to $T_2$  and $tr_{21}:\mathcal{L}_2\to\mathcal{L}_1$ of $T_2$ to $T_1$, such that $tr^*_{12}:Mod(T_2)\to Mod(T_1)$ and $tr^*_{21}:Mod(T_1)\to Mod(T_2)$ are bijections which are inverses of each other.
\end{defn}

\begin{defn}\label{def-rename}
Theories $T$ and $T'$ are \textit{disjoint renamings} of each other, in symbols $$T \rename T',$$ if their signatures $\Sigma$ and $\Sigma'$ are disjoint, i.e., $\Sigma \cap \Sigma' = \emptyset$, and there is a renaming bijection $R^{\emptyset}_{\Sigma \Sigma'}$ from $\Sigma$ to $\Sigma'$ such that the arity of the relations is preserved and that the formulas in $T'$ are defined by renaming $R^{\emptyset}_{\Sigma \Sigma'}$ of formulas from $T$.\footnote{While bijection $R^{\emptyset}_{\Sigma \Sigma'}$ is defined on signatures, it can be naturally extended to the languages using those signatures. We will use the same symbol $R^{\emptyset}_{\Sigma \Sigma'}$ for that.}
\end{defn}

\begin{rem}\label{rem-rename}
Note that disjoint renaming is symmetric but neither reflexive nor transitive. Also, if $T \rename T'$, then $T \neq T'$, $T \dbhrel T'$, $T \bhrel T'$, $T \defequiv T'$ and $T \intertrans T'$. 
\end{rem}

\section{Properties}
\label{sec:2}
\begin{thm}\label{thm-not-transitive}
Definitional mergeability $\bhrel$ is not transitive. Hence it is not an equivalence relation.
\end{thm}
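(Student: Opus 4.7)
The plan is to exhibit three concrete theories $T_1$, $T_2$, $T_3$ such that $T_1 \bhrel T_2$ and $T_2 \bhrel T_3$ but $T_1 \not\bhrel T_3$. The idea is to make $T_2$ live in a smaller signature than $T_1$ and $T_3$, so that it cannot ``see'' the conflict between the latter two on a shared symbol. Concretely, I would take $T_1$ in signature $\{P,Q\}$ axiomatized by $\forall x\,(P(x) \leftrightarrow Q(x))$, $T_2$ in signature $\{P\}$ with no axioms, and $T_3$ in signature $\{P,Q\}$ axiomatized by $\forall x\,(P(x) \leftrightarrow \neg Q(x))$.

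Verifying the two mergeabilities is straightforward. For $T_1 \bhrel T_2$, take $T^+ := T_1$: it is trivially a definitional extension of itself, and it is a definitional extension of $T_2$ witnessed by the single explicit definition $\forall x\,(Q(x) \leftrightarrow P(x))$, since $T_2 \cup \{\forall x\,(Q(x) \leftrightarrow P(x))\} \equiv T_1$. The argument for $T_2 \bhrel T_3$ is symmetric, using $T^+ := T_3$ and the definition $\forall x\,(Q(x) \leftrightarrow \neg P(x))$.

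The main step is ruling out $T_1 \bhrel T_3$. Suppose for contradiction that some $T^+$ were a common definitional extension. Then $T^+ \models T_1$ and $T^+ \models T_3$, so every model of $T^+$ would have to satisfy both $\forall x\,(P(x) \leftrightarrow Q(x))$ and $\forall x\,(P(x) \leftrightarrow \neg Q(x))$ simultaneously; on a non-empty domain this is impossible, hence $T^+$ is inconsistent. On the other hand, $T_1$ is consistent (any one-element structure with $P$ and $Q$ both true models it), and since $T_1 \eqarrowright T^+$ we have $T_1 \defequiv T^+$, so by Remark \ref{rem-inconsistent} the theory $T^+$ must be consistent, a contradiction. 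Hence $T_1 \not\bhrel T_3$, which yields non-transitivity and therefore $\bhrel$ is not an equivalence relation.

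I do not expect a serious obstacle: the only subtle point is knowing that adding explicit definitions to a consistent theory cannot destroy consistency, which is the content of Remark \ref{rem-inconsistent} (or, more concretely, follows from the observation that any model of $T_1$ expands to a model of $T_1 \cup \Delta_{13}$ by interpreting each newly defined symbol according to its defining formula).
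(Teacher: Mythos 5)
Your proposal is correct and uses essentially the same mechanism as the paper's proof: two mutually contradictory theories sharing a symbol, together with a mediator theory that merges with each of them by defining the extra symbol in two incompatible ways, with non-mergeability of the endpoints following from consistency considerations. The concrete theories differ slightly (the paper uses $T_1=\{\exists!x(x=x),\forall x\,p(x)\}$, $T_2=\{\exists!x(x=x),\forall x\,\neg p(x)\}$ mediated by $T_3=\{\exists!x(x=x),\forall x\,q(x)\}$ in a disjoint signature, whereas you use a mediator in a sub-signature), but the argument is the same and equally valid.
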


The proof is based on \citep[Example 5]{Glymour}. Note that the proof relies on the signatures of theories $T_1$ and $T_2$ being non-disjoint.

\begin{proof}
Let $p$ and $q$ be unary predicate symbols. Consider the following theories $T_1$, $T_2$ and $T_3$:
\begin{eqnarray*}
T_1 &=& \{\,\exists! x(x=x),\ \forall x[p(x)]\,\}\\
T_2 &=& \{\,\exists! x(x=x),\ \forall x[\neg p(x)]\,\}\\
T_3 &=& \{\,\exists! x(x=x),\ \forall x[q(x)]\,\}
\end{eqnarray*}
$T_1$ and $T_2$ are not definitionally mergeable, since they do not have a common extension as they contradict each other\footnote{$\exists !$ is an abbreviation for ``there exists exactly one'', i.e., $$\exists ! x \big( \varphi(x) \big) \iff \exists x \Big( \varphi(x) \land \neg \exists y \big( \varphi(y) \land x \neq y \big) \Big).$$}. 

Let us define $T_1^+$ where $q$ is defined in terms of $T_1$ as $p$ and let us define $T_3^+$ where $p$ is defined in terms of $T_3$ as $q$, i.e.,
\begin{eqnarray*}
T_1^+ &=& \{\,\exists! x(x=x),\ \forall x[p(x)],\ \forall x[q(x)\leftrightarrow p(x)] \,\}\\
T_3^+ &=& \{\,\exists! x(x=x),\ \forall x[q(x)],\ \forall x[p(x)\leftrightarrow q(x)]\,\}.
\end{eqnarray*}

Then $T_1$ and $T_3$ are definitionally mergeable because $T_1 \eqarrowright T_1^+$, $T_3 \eqarrowright T_3^+$, and $T_1^+ \equiv T_3^+$. 

Let us now define $T_2^+$ where $q$ is defined in terms of $T_2$ as $\neg p$ and let us define $T_3^\times$ where $p$ is defined in terms of $T_3$ as $\neg q$, i.e.,
\begin{eqnarray*}
T_2^+ &=& \{\,\exists! x(x=x),\ \forall x[\neg p(x)],\ \forall x[q(x)\leftrightarrow \neg p(x)] \,\}\\
T_3^\times &=& \{\,\exists! x(x=x),\ \forall x[q(x)],\ \forall x[p(x)\leftrightarrow \neg q(x)]\,\}.
\end{eqnarray*}

Then $T_2$ and $T_3$ are definitionally mergeable because $T_2 \eqarrowright T_2^+$, $T_3 \eqarrowright T_3^\times$, and $T_2^+ \equiv T_3^\times$.

Therefore, being definitionally mergeable is not transitive and hence not an equivalence relation as $T_1\bhrel T_3\bhrel T_2$ but $T_1$ and $T_2$ are not  definitionally mergeable.\qedhere
\end{proof}

\begin{thm}\label{thm-disj-merge}
If theories $T_1$, $T_2$ and $T_3$ are formulated in languages having disjoint signatures and $T_1 \bhrel T_2$ and $T_2\bhrel T_3$, then $T_1$ and $T_3$ are also mergeable, i.e.,
\[T_1\dbhrel T_2\dbhrel T_3 \text{ and }\Sigma_1\cap\Sigma_3=\emptyset \Longrightarrow T_1 \dbhrel T_3.\]
\end{thm}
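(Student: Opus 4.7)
The plan is to build a common definitional extension $T^+$ of $T_1$ and $T_3$ inside the combined signature $\Sigma_1 \cup \Sigma_2 \cup \Sigma_3$, exploiting the two hypothesised mergers through $T_2$. By Lemma~\ref{lem-bhdef}, $T_1 \dbhrel T_2$ and $T_2 \dbhrel T_3$ yield sets of explicit definitions $\Delta_{12}, \Delta_{21}, \Delta_{23}, \Delta_{32}$ with $T_1 \cup \Delta_{12} \equiv T_2 \cup \Delta_{21}$ and $T_2 \cup \Delta_{23} \equiv T_3 \cup \Delta_{32}$. Since $\Sigma_1 \cap \Sigma_2 = \Sigma_2 \cap \Sigma_3 = \emptyset$ (from the two disjoint mergers) together with the hypothesised $\Sigma_1 \cap \Sigma_3 = \emptyset$, the three signatures are pairwise disjoint, so we may safely form and reason about theories in $\Sigma_1 \cup \Sigma_2 \cup \Sigma_3$ without symbol clashes.

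First I would set $T^+ \defis T_1 \cup \Delta_{12} \cup \Delta_{23}$. Because $T_1 \cup \Delta_{12} \equiv T_2 \cup \Delta_{21}$ implies in particular $T_1 \cup \Delta_{12} \models T_2 \cup \Delta_{21}$, we get $T^+ \equiv T_2 \cup \Delta_{21} \cup \Delta_{23}$; chaining with $T_2 \cup \Delta_{23} \equiv T_3 \cup \Delta_{32}$ then gives $T^+ \equiv T_3 \cup \Delta_{32} \cup \Delta_{21}$. So $T^+$ is a natural candidate common extension, but $\Delta_{23}$ defines $\Sigma_3$-symbols using $\mathcal{L}_2$ and $\Delta_{21}$ defines $\Sigma_1$-symbols using $\mathcal{L}_2$, not in the languages required by Definition~\ref{def-extension}. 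To repair this, I would substitute: define $\Delta_{13}$ from $\Delta_{23}$ by replacing every occurrence of each $\Sigma_2$-symbol $r$ with the $\mathcal{L}_1$-definiens assigned to $r$ by $\Delta_{12}$, and symmetrically define $\Delta_{31}$ from $\Delta_{21}$ using $\Delta_{32}$. Then $T^+ \equiv T_1 \cup \Delta_{12} \cup \Delta_{13}$ and $T^+ \equiv T_3 \cup \Delta_{32} \cup \Delta_{31}$; by pairwise disjointness of the signatures, $\Delta_{12} \cup \Delta_{13}$ is a legitimate set of explicit definitions of each symbol in $\Sigma_2 \cup \Sigma_3$ in terms of $\mathcal{L}_1$, and dually for $\Delta_{32} \cup \Delta_{31}$. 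Thus $T_1 \eqarrowright T^+ \eqarrowleft T_3$, and since $\Sigma_1 \cap \Sigma_3 = \emptyset$ we conclude $T_1 \dbhrel T_3$.

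The main obstacle is the substitution step: we must establish that replacing defined symbols by their definientia in a formula preserves logical equivalence modulo the defining biconditionals. Concretely, the required lemma is that if $\forall \bar y \big(r(\bar y) \leftrightarrow \psi_r(\bar y)\big) \in \Delta_{12}$ for each $r \in \Sigma_2$, and $\chi$ is any $(\mathcal{L}_1 \cup \mathcal{L}_2)$-formula, then $\Delta_{12} \models \chi \leftrightarrow \chi'$, where $\chi'$ is obtained by simultaneously replacing each atomic subformula $r(\bar t)$ by $\psi_r(\bar t)$ with capture-avoiding renaming of bound variables. This is a routine induction on formula complexity, but one must be careful with bound-variable capture; crucially, pairwise disjointness of the three signatures guarantees that the substituted formulas do not reintroduce the symbols being defined, so a single simultaneous pass suffices. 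Once this substitution lemma is in hand, the equivalences $T^+ \equiv T_1 \cup \Delta_{12} \cup \Delta_{13}$ and $T^+ \equiv T_3 \cup \Delta_{32} \cup \Delta_{31}$ follow immediately, completing the proof.
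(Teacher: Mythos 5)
Your proposal is correct and follows essentially the same route as the paper: the paper likewise forms $T_1\cup\Delta_{12}\cup\Delta_{23}$ and $T_3\cup\Delta_{32}\cup\Delta_{21}$ and shows they have the same class of models (arguing model-by-model rather than by chaining logical equivalences through $T_2\cup\Delta_{21}\cup\Delta_{23}$, but the computation is the same). The one point where you go beyond the paper is the substitution step converting $\Delta_{23}$ and $\Delta_{21}$ into genuine $\mathcal{L}_1$- and $\mathcal{L}_3$-definitions $\Delta_{13}$ and $\Delta_{31}$: the paper stops at $T_1\cup\Delta_{12}\cup\Delta_{23}\equiv T_3\cup\Delta_{32}\cup\Delta_{21}$ and implicitly treats the chained set $\Delta_{12}\cup\Delta_{23}$ as a set of explicit definitions in terms of $\mathcal{L}_1$, so your extra care (together with the observation that disjointness prevents the substituted definientia from reintroducing $\Sigma_2$-symbols) actually tightens the argument relative to the letter of Definition~\ref{def-extension} and Lemma~\ref{lem-bhdef}.
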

\begin{proof}
Let $T_1$, $T_2$ and $T_3$ be theories  such that $\Sigma_1 \cap \Sigma_3 = \emptyset$ and $T_1\dbhrel T_2\dbhrel T_3$.

We have from the definitions of definitional equivalence and definitional extension that there exist sets $\Delta_{12}$, $\Delta_{21}$, $\Delta_{23}$ and $\Delta_{32}$ of explicit definitions, such that
\begin{equation}\label{eq01}
T_1\cup \Delta_{12} \equiv T_2\cup \Delta_{21} 
\text{, i.e., } 
Mod(T_1\cup \Delta_{12}) = Mod(T_2\cup \Delta_{21}),
\end{equation}
and
\begin{equation}\label{eq02}
T_2\cup \Delta_{23}\equiv T_3\cup \Delta_{32} 
\text{, i.e., }
Mod(T_2\cup \Delta_{23})=Mod(T_3\cup \Delta_{32}).
\end{equation}

We want to prove that $T_1\cup \Delta_{12}\cup\Delta_{23}\equiv T_3\cup \Delta_{32}\cup \Delta_{21}$, i.e., 
$$Mod(T_1\cup \Delta_{12}\cup\Delta_{23}) =Mod(T_3\cup \Delta_{32}\cup \Delta_{21}).$$

If one of the theories $T_1$, $T_2$ or $T_3$ is inconsistent, then by Remark \ref{rem-inconsistent}, all of them are inconsistent. In that case $T_1\bhrel T_3$ is true because all statements can be proven ex falso in both theories. Let us for the rest of the proof now assume that all of them are consistent.

Let $\mathfrak{M}\in Mod(T_1\cup \Delta_{12}\cup\Delta_{23})$. Such $\mathfrak{M}$ exists because $\Sigma_1\cap\Sigma_2=\emptyset$ and hence $\Delta_{23}$ cannot make consistent theory $T_1\cup \Delta_{12}$ inconsistent.

\begin{sloppypar}
Then $\mathfrak{M}\models T_1\cup \Delta_{12}\cup\Delta_{23}$. Therefore $\mathfrak{M}\models T_2\cup \Delta_{21}$ by \eqref{eq01} and also ${\mathfrak{M}\models T_3\cup \Delta_{32}}$ because of \eqref{eq02} and the fact that $\mathfrak{M}\models \Delta_{23}$. Hence ${\mathfrak{M}\models T_3\cup \Delta_{32}\cup \Delta_{21}}$. Consequently, $$Mod(T_1\cup \Delta_{12}\cup\Delta_{23}) \subseteq Mod(T_3\cup \Delta_{32}\cup \Delta_{21}).$$
\end{sloppypar}

An analogous calculation shows that $$Mod(T_1\cup \Delta_{12}\cup\Delta_{23}) \supseteq Mod(T_3\cup \Delta_{32}\cup \Delta_{21}).$$ So $Mod(T_1\cup \Delta_{12}\cup\Delta_{23}) =Mod(T_3\cup \Delta_{32}\cup \Delta_{21})$ and this is what we wanted to prove. \qedhere
\end{proof}

\begin{thm}\label{thm:equivrel}
Definitional equivalence $\defequiv$ is an equivalence relation.
\end{thm}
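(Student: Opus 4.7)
The plan is to read off the three properties of an equivalence relation directly from Definition~\ref{def-defeq}, since the definition is essentially constructed to be the transitive, symmetric closure of the ``is a definitional extension of'' relation $\eqarrowright$. There is no real obstacle here; the only thing to check is that the chains can be chosen, reversed, and concatenated in the way one expects.

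For reflexivity, given any theory $T$, I would observe that $T$ is a definitional extension of itself via the empty set of explicit definitions (there are no relation symbols in $\mathcal{L}\setminus\mathcal{L}$ to define), so $T \eqarrowright T$ and the two-element chain $T,T$ witnesses $T \defequiv T$. Alternatively, one can note that the single-term chain $T_1 = T$ already satisfies the condition in Definition~\ref{def-defeq}, since the quantification ``for all $1 \leq i < n$'' is vacuous when $n=1$.

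For symmetry, suppose $T \defequiv T'$ is witnessed by the chain $T_1,\ldots,T_n$ with $T_1 = T$ and $T_n = T'$. I would then consider the reversed chain $T_n, T_{n-1}, \ldots, T_1$: each consecutive pair $T_{i+1}, T_i$ in it is related by $\eqarrowleft$ precisely when $T_i \eqarrowright T_{i+1}$ in the original, and by $\eqarrowright$ precisely when $T_i \eqarrowleft T_{i+1}$ originally. Either way, the reversed chain is still valid and witnesses $T' \defequiv T$.

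For transitivity, suppose $T \defequiv T'$ is witnessed by $T_1,\ldots,T_n$ and $T' \defequiv T''$ is witnessed by $S_1,\ldots,S_m$, with $T_n = T' = S_1$. I would then concatenate these into a single chain $T_1,\ldots,T_n,S_2,\ldots,S_m$ from $T$ to $T''$; every consecutive pair either lies entirely within one of the two original chains, or is the pair $(T_n, S_2) = (S_1, S_2)$ coming from the second chain, so each pair is related by $\eqarrowright$ or $\eqarrowleft$ as required. Thus $T \defequiv T''$ and $\defequiv$ is an equivalence relation.
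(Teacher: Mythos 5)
Your proof is correct and follows essentially the same route as the paper's: reflexivity via the empty set of explicit definitions, symmetry by reversing the chain, and transitivity by concatenating chains. Your treatment is slightly more explicit about how the reversed and concatenated chains satisfy the conditions of Definition~\ref{def-defeq}, but the argument is the same.
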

\begin{proof}
To show that definitional equivalence is an equivalence relation, we need to show that it is reflexive, symmetric and transitive:
\begin{itemize}
\item $\defequiv$ is reflexive because for every theory $T \eqarrowright T$ since the set of explicit definitions $\Delta$ can be the empty set, and hence $T \defequiv T$.
\item $\defequiv$ is symmetric: if $T \defequiv T'$, then there exists a chain $T \ldots T'$ of theories connected by $\equiv$, $\eqarrowright$ and $\eqarrowleft$. The reverse chain $T' \ldots T$ has the same kinds of connections, and hence $T' \defequiv T$.
\item $\defequiv$ is transitive: if $T_1 \defequiv T_2$ and $T_2 \defequiv T_3$, then there exists chains $T_1 \ldots T_2$ and $T_2 \ldots T_3$ of theories connected by $\equiv$, $\eqarrowright$ and $\eqarrowleft$. The concatenated chain $T_1 \ldots T_2 \ldots T_3$ has the same kinds of connections, and hence $T_1 \defequiv T_3$. \qedhere
\end{itemize}
\end{proof}

\begin{lemma}\label{lemma-merge}
If $T_1 \defequiv T_2$, then there exists a chain of definitional mergers such that \[T_1 \bhrel T_a \bhrel \ldots \bhrel T_z \bhrel T_2.\]
\end{lemma}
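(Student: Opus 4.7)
The plan is to unfold the definition of $\defequiv$ into its defining chain of definitional extensions and then observe that each individual extension step is already a definitional merger, so the same chain (essentially unchanged) serves as the required chain of mergers.

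More explicitly, assume $T_1 \defequiv T_2$. By Definition~\ref{def-defeq}, there is a chain $S_1, S_2, \ldots, S_n$ of theories with $S_1 = T_1$ and $S_n = T_2$ such that for each $1 \le i < n$ either $S_i \eqarrowright S_{i+1}$ or $S_i \eqarrowleft S_{i+1}$. I claim that $S_i \bhrel S_{i+1}$ holds in either case. Consider the case $S_i \eqarrowright S_{i+1}$ (the other case is symmetric). Then $S_{i+1}$ is a definitional extension of $S_i$, and moreover $S_{i+1}$ is trivially a definitional extension of itself (take the set of explicit definitions to be empty, since no new relation symbols are introduced). Therefore $S_{i+1}$ itself witnesses a common definitional extension, giving $S_i \eqarrowright S_{i+1} \eqarrowleft S_{i+1}$, i.e., $S_i \bhrel S_{i+1}$ in the sense of Definition~\ref{def-bhrel}.

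Concatenating these mergers along the chain yields
\[ T_1 = S_1 \bhrel S_2 \bhrel \cdots \bhrel S_{n-1} \bhrel S_n = T_2, \]
which is the required chain of definitional mergers (with $T_a = S_2$ and $T_z = S_{n-1}$).

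There is no real obstacle here: the only subtle point is recognizing that every extension step $\eqarrowright$ or $\eqarrowleft$ is trivially a merger, by allowing the larger theory to serve as the common extension with an empty set of new definitions on one side. This is exactly the observation already encoded in Remark~\ref{rem:special}, applied separately to each link of the chain.
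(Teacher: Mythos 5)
Your proof is correct and follows essentially the same route as the paper: both arguments rest on the observation that definitional extension is reflexive (take $\Delta=\emptyset$), so each single link $S_i \eqarrowright S_{i+1}$ or $S_i \eqarrowleft S_{i+1}$ already witnesses $S_i \bhrel S_{i+1}$, and the chain from Definition~\ref{def-defeq} becomes a chain of mergers. Your explicit case analysis of the two link directions is just a more spelled-out version of the paper's ``add trivial extension steps wherever needed.''
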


\begin{proof} 
The finite chain of steps given by Definition~\ref{def-defeq} for definitional equivalence can be extended by adding extra extension steps $\eqarrowright$ or $\eqarrowleft$ wherever needed in the chain because definitional extension is reflexive since the set of explicit definitions $\Delta$ can be the empty set. \qedhere
\end{proof}

\begin{lemma} \label{lemma-rename}
Let $T_a$ and $T_b$ two theories for which  $T_a \bhrel T_b$. Then
\begin{itemize}
\item if $T_{b} \rename  T_{b}'$ and $\Sigma_{a} \cap \Sigma'_{b} = \emptyset$, then $T_a \dbhrel T'_b$ ,
\item if  $T_{a} \rename  T_{a}'$ , $T_{b} \rename  T_{b}'$ and $\Sigma'_{a} \cap \Sigma'_{b} = \emptyset$, then $T'_a \dbhrel T'_b$.
\end{itemize}
\end{lemma}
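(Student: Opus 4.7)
The plan is to first invoke Lemma~\ref{lem-bhdef} on the hypothesis $T_a \bhrel T_b$ to obtain sets $\Delta_{ab}$ and $\Delta_{ba}$ of explicit definitions (the former defining each symbol of $\Sigma_b \setminus \Sigma_a$ by a formula of $\mathcal{L}_a$, the latter defining each symbol of $\Sigma_a \setminus \Sigma_b$ by a formula of $\mathcal{L}_b$) satisfying $T_a \cup \Delta_{ab} \equiv T_b \cup \Delta_{ba}$. This unpacking gives something concrete to transport through the renamings.

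For the first bullet, let $R \defis R^{\emptyset}_{\Sigma_b \Sigma_{b'}}$ witness $T_b \rename T_b'$, extended to formulas in the standard way. I would set
\[
\Delta_{ab'} \defis R(\Delta_{ab}) \cup \{\forall \bar x\,[R(p)(\bar x) \leftrightarrow p(\bar x)] : p \in \Sigma_a \cap \Sigma_b\},
\]
\[
\Delta_{b'a} \defis R(\Delta_{ba}) \cup \{\forall \bar x\,[p(\bar x) \leftrightarrow R(p)(\bar x)] : p \in \Sigma_a \cap \Sigma_b\}.
\]
The first set consists of explicit definitions of every symbol of $\Sigma_{b'}$ in terms of $\mathcal{L}_a$ ($R(\Delta_{ab})$ covers $R(\Sigma_b \setminus \Sigma_a)$ while the extra equivalences cover $R(\Sigma_a \cap \Sigma_b)$), and symmetrically the second covers all of $\Sigma_a$ in terms of $\mathcal{L}_{b'}$. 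The key step is then to verify $T_a \cup \Delta_{ab'} \equiv T_{b'} \cup \Delta_{b'a}$: given $\mathfrak{M}' \models T_a \cup \Delta_{ab'}$, I would construct $\mathfrak{M}$ of signature $\Sigma_a \cup \Sigma_b$ on the same universe by setting $p^{\mathfrak{M}} \defis p^{\mathfrak{M}'}$ for $p \in \Sigma_a$ and $p^{\mathfrak{M}} \defis R(p)^{\mathfrak{M}'}$ for $p \in \Sigma_b$, consistency on $\Sigma_a \cap \Sigma_b$ being ensured by the equivalence definitions inside $\Delta_{ab'}$. Then $\mathfrak{M} \models T_a \cup \Delta_{ab}$ holds by construction, hence $\mathfrak{M} \models T_b \cup \Delta_{ba}$ by the hypothesis, and transporting back through $R$ (a renaming preserves truth once interpretations are swapped along the bijection) yields $\mathfrak{M}' \models T_{b'} \cup \Delta_{b'a}$; the reverse inclusion is symmetric. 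Lemma~\ref{lem-bhdef} then delivers $T_a \dbhrel T_{b'}$ because $\Sigma_a \cap \Sigma_{b'} = \emptyset$.

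For the second bullet I would reduce to the first. Choose a fresh renaming $T_a \rename T_{a''}$ whose signature $\Sigma_{a''}$ is disjoint from $\Sigma_a$, $\Sigma_b$, $\Sigma_{a'}$ and $\Sigma_{b'}$. Applying the first bullet to $T_b \bhrel T_a$ (using symmetry of $\bhrel$) together with $T_a \rename T_{a''}$ gives $T_{a''} \dbhrel T_b$; applying it again to $T_{a''} \bhrel T_b$ and $T_b \rename T_{b'}$ gives $T_{a''} \dbhrel T_{b'}$. Composing the bijections $R^{\emptyset}_{\Sigma_a \Sigma_{a''}}$ and $R^{\emptyset}_{\Sigma_a \Sigma_{a'}}$ supplies $T_{a''} \rename T_{a'}$, hence by Remark~\ref{rem-rename} also $T_{a''} \dbhrel T_{a'}$, and Theorem~\ref{thm-disj-merge} applied to $T_{a'} \dbhrel T_{a''} \dbhrel T_{b'}$ with $\Sigma_{a'} \cap \Sigma_{b'} = \emptyset$ finishes the argument.

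The main obstacle is the treatment of the shared symbols $\Sigma_a \cap \Sigma_b$ in the first bullet: after renaming the $\Sigma_b$-side, each such symbol produces two syntactically distinct symbols (one in $\Sigma_a$, one in $\Sigma_{b'}$) that are no longer automatically identified. The \emph{trivial} equivalence definitions $R(p) \leftrightarrow p$ are precisely the device that re-imposes the identification implicit in the original common extension, and checking that they interact correctly with the renamed non-trivial definitions from $\Delta_{ab}$ and $\Delta_{ba}$ under the model correspondence $\mathfrak{M} \leftrightarrow \mathfrak{M}'$ is the crux of the proof.
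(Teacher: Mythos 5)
Your proposal is essentially sound and, for the first bullet, follows the same strategy as the paper: unpack $T_a \bhrel T_b$ via Lemma~\ref{lem-bhdef} into $\Delta_{ab}$, $\Delta_{ba}$ with $T_a\cup\Delta_{ab}\equiv T_b\cup\Delta_{ba}$, transport these definitions through the renaming, and verify the resulting logical equivalence. One technical wrinkle in your version: you set $\Delta_{ab'}\defis R(\Delta_{ab})\cup\{\forall\bar x\,[R(p)(\bar x)\leftrightarrow p(\bar x)]: p\in\Sigma_a\cap\Sigma_b\}$ with $R$ applied to \emph{entire} definitions. If a defining formula $\varphi_p\in\mathcal{L}_a$ happens to use a shared symbol $s\in\Sigma_a\cap\Sigma_b$, then $R(\varphi_p)$ contains $R(s)\in\Sigma_b'$ and is no longer a formula of $\mathcal{L}_a$, so $R(\Delta_{ab})$ is not literally a set of explicit definitions in terms of $\mathcal{L}_a$ as Definition~\ref{def-expldef} requires. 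The paper sidesteps this by renaming only the defined symbol and leaving the body $\varphi_p$ untouched (it is already in $\mathcal{L}_a$), while renaming the bodies $\varphi_q\in\mathcal{L}_b$ only on the other side, where they must land in $\mathcal{L}_{b'}$. Your construction is still salvageable, since modulo your trivial definitions $R(s)\leftrightarrow s$ the two versions are logically equivalent and definitional extensions are only required up to $\equiv$, but this needs to be said; your explicit model-correspondence verification and the separate handling of $\Sigma_a\cap\Sigma_b$ are otherwise a more detailed account of exactly the point the paper treats implicitly by allowing $\varphi_p$ to be $p$ itself for shared symbols.

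For the second bullet you genuinely diverge: the paper repeats the direct construction with both renamings applied simultaneously ($\Delta_{a'b'}$, $\Delta_{b'a'}$), whereas you reduce to the first bullet via an auxiliary renaming $T_{a''}$ with fresh signature and then compress $T_{a'}\dbhrel T_{a''}\dbhrel T_{b'}$ using Remark~\ref{rem-rename} and Theorem~\ref{thm-disj-merge}. This is legitimate (Theorem~\ref{thm-disj-merge} precedes this lemma and does not depend on it, so there is no circularity) and buys you economy, at the cost of invoking heavier machinery where the paper just writes down two more sets of definitions.
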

\begin{proof}
Since $T_a \bhrel T_b$, there are by Lemma \ref{lem-bhdef} sets $\Delta_{ab}$ and $\Delta_{ba}$ of explicite definitions such that $T_a\cup \Delta_{ab}\equiv T_b\cup \Delta_{ba}$:
$$\Delta_{ab}=\left\{\forall \bar x \left[ p(\bar x) \leftrightarrow \varphi_p(\bar x) \right]: p\in \Sigma_b \text{ and } \varphi_p\in \mathcal{L}_a\right\},$$
i.e., $\varphi_p$ is the definition of predicate $p$ from $\Sigma_b$ in language $\mathcal{L}_a$.
$$\Delta_{ba}=\left\{\forall \bar x \left[ q(\bar x) \leftrightarrow \varphi_q(\bar x) \right]: q\in \Sigma_a \text{ and } \varphi_q\in \mathcal{L}_b\right\},$$
i.e., $\varphi_q$ is the definition of predicate $q$ from $\Sigma_a$ in language $\mathcal{L}_b$.
We can now define $\Delta_{ab'}$ and $\Delta_{b'a}$ in the following way:
$$\Delta_{ab'} \defis \left\{\forall \bar x \left[ R^{\emptyset}_{\Sigma_b\Sigma'_b}\big(p\big)(\bar x) \leftrightarrow \varphi_p(\bar x) \right]: p\in \Sigma_b \text{ and } \varphi_p\in \mathcal{L}_a\right\},$$
i.e., in $\Delta_{ab'}$ the renaming $R^{\emptyset}_{\Sigma_b\Sigma'_b}\big(p\big)$ of predicate $p$ from $\Sigma_b$ is defined with the same formula $\varphi_p$ as $p$ was defined in $\Delta_{ab}$.
$$\Delta_{b'a} \defis \left\{\forall \bar x \left[ q(\bar x) \leftrightarrow R^{\emptyset}_{\Sigma_b\Sigma'_b}\big(\varphi_q\big)(\bar x) \right]: q\in \Sigma_a \text{ and } \varphi_q\in \mathcal{L}_b\right\},$$
i.e., in $\Delta_{b'a}$ predicate $q$ from $\Sigma_a$ is defined with the renaming $R^{\emptyset}_{\Sigma_b\Sigma'_b}\big(\varphi_q)$ of the formula $\varphi_q$ that was used  in $\Delta_{ba}$ to define $q$.

Then $T_a\cup \Delta_{ab'}\equiv T'_b\cup \Delta_{b'a}$\,, and hence we have proven that $T_a \dbhrel T'_b$.

Similarly, we can define $\Delta_{a'b'}$ and $\Delta_{b'a'}$ as:
$$\Delta_{a'b'} \defis \left\{\forall \bar x \left[ R^{\emptyset}_{\Sigma_b\Sigma'_b}\big(p\big)(\bar x) \leftrightarrow R^{\emptyset}_{\Sigma_a\Sigma'_a}\big(\varphi_p\big)(\bar x) \right]: p\in \Sigma_b \text{ and } \varphi_p\in \mathcal{L}_a\right\},$$
i.e., in $\Delta_{a'b'}$ the renaming $R^{\emptyset}_{\Sigma_b\Sigma'_b}\big(p\big)$ of predicate $p$ from $\Sigma_b$ is defined with the renaming $R^{\emptyset}_{\Sigma_b\Sigma'_b}\big(\varphi_p)$ of the formula $\varphi_p$ that was used  in $\Delta_{ab}$ to define $p$.
$$\Delta_{b'a'} \defis \left\{\forall \bar x \left[ R^{\emptyset}_{\Sigma_a\Sigma'_a}\big(q\big)(\bar x) \leftrightarrow R^{\emptyset}_{\Sigma_b\Sigma'_b}\big(\varphi_q\big)(\bar x) \right]: q\in \Sigma_a \text{ and } \varphi_q\in \mathcal{L}_b\right\},$$
i.e., in $\Delta_{b'a'}$ the renaming $R^{\emptyset}_{\Sigma_b\Sigma'_b}\big(q\big)$ of predicate $q$ from $\Sigma_a$ is defined with the renaming $R^{\emptyset}_{\Sigma_b\Sigma'_b}\big(\varphi_q)$ of the formula $\varphi_q$ that was used  in $\Delta_{ba}$ to define $q$.

Then $T'_a\cup \Delta_{a'b'}\equiv T'_b\cup \Delta_{b'a'}$, and hence we have proven that $T'_a \dbhrel T'_b$. \qedhere
\end{proof}

\begin{thm}\label{thm:renaming} Theories $T_1$ and $T_2$ are definitionally equivalent iff there is a theory $T'_{2}$ which is the disjoint renaming of $T_2$ to a signature which is also disjoint from the signature of $T_1$ such that $T'_{2}$ and $T_1$ are definitionally mergeable, i.e.,
\[T_1\defequiv T_2 \iff \exists T' \big[ T_1 \dbhrel T'_{2} \text{ and } T'_{2}\rename T_2 \big].\] 
\end{thm}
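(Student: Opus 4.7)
\emph{Reverse direction.} The plan for this direction is almost immediate. Assume $T_1 \dbhrel T'_2$ and $T'_2 \rename T_2$. Since $\dbhrel$ is a strengthening of $\bhrel$, which by Remark~\ref{rem:special} implies $\defequiv$, we have $T_1 \defequiv T'_2$. Remark~\ref{rem-rename} gives $T'_2 \defequiv T_2$, and transitivity of $\defequiv$ (Theorem~\ref{thm:equivrel}) closes the argument.

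\emph{Forward direction.} Starting from $T_1 \defequiv T_2$, Lemma~\ref{lemma-merge} supplies a finite chain of definitional mergers
\[T_1 \bhrel T_{c_1} \bhrel T_{c_2} \bhrel \cdots \bhrel T_{c_n} \bhrel T_2.\]
The idea is to rename every theory in this chain other than $T_1$ to a fresh signature so that the signatures along the chain become pairwise disjoint; then each $\bhrel$ link lifts to a $\dbhrel$ link by Lemma~\ref{lemma-rename}, and Theorem~\ref{thm-disj-merge} lets us collapse the resulting chain into a single $\dbhrel$.

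Concretely, I would choose pairwise disjoint fresh signatures $\Sigma'_{c_1},\ldots,\Sigma'_{c_n},\Sigma'_2$, all disjoint from $\Sigma_1$ and from $\Sigma_2$, and let $T'_{c_1},\ldots,T'_{c_n},T'_2$ denote the corresponding disjoint renamings (so $T'_{c_i}\rename T_{c_i}$ and $T'_2\rename T_2$). The first bullet of Lemma~\ref{lemma-rename}, applied to $T_1 \bhrel T_{c_1}$ with $T_{c_1}\rename T'_{c_1}$, produces $T_1 \dbhrel T'_{c_1}$. The second bullet of the same lemma, applied to each interior link $T_{c_i} \bhrel T_{c_{i+1}}$ with $T_{c_i}\rename T'_{c_i}$ and $T_{c_{i+1}}\rename T'_{c_{i+1}}$, produces $T'_{c_i} \dbhrel T'_{c_{i+1}}$; and the same bullet applied to $T_{c_n} \bhrel T_2$ with $T_{c_n}\rename T'_{c_n}$ and $T_2 \rename T'_2$ produces $T'_{c_n} \dbhrel T'_2$. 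Since all signatures in the resulting chain
\[T_1 \dbhrel T'_{c_1} \dbhrel T'_{c_2} \dbhrel \cdots \dbhrel T'_{c_n} \dbhrel T'_2\]
are pairwise disjoint, iterating Theorem~\ref{thm-disj-merge} from left to right collapses this chain into $T_1 \dbhrel T'_2$, while $T'_2 \rename T_2$ holds by construction.

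The main obstacle is the bookkeeping: ensuring enough fresh relation symbols so that the signatures along the entire chain are pairwise disjoint, and keeping track of which bullet of Lemma~\ref{lemma-rename} applies at the endpoint where $T_1$ is \emph{not} renamed versus at the interior links where both endpoints are renamed. Once these disjointness conditions are arranged, everything reduces to mechanical iteration of Lemma~\ref{lemma-rename} and Theorem~\ref{thm-disj-merge}.
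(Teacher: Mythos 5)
Your proposal is correct and follows essentially the same route as the paper: both directions use Lemma~\ref{lemma-merge} to obtain a chain of mergers, rename every theory except $T_1$ to pairwise disjoint fresh signatures, lift each link via the appropriate bullet of Lemma~\ref{lemma-rename}, and collapse the chain with Theorem~\ref{thm-disj-merge}; the converse via Remarks~\ref{rem:special} and~\ref{rem-rename} plus transitivity matches the paper's explicit four-step chain. No gaps.
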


\begin{proof}
Let $T_1$ and $T_2$ be definitional equivalent theories. From Lemma \ref{lemma-merge}, we know that there exists a finite chain of definitonal mergers 
\[T_1 \bhrel T_a \bhrel \ldots \bhrel T_z \bhrel T_2.\] 

For all $x$ in $\{a,\ldots, z, 2\}$, let $T'_x$ be a renaming of $T_x$ such that $\Sigma_1 \cap \Sigma'_x=\emptyset$ and for all $y$ in $\{a,\ldots z, 2\}$, if $x \neq y$ then $\Sigma'_x\cap \Sigma'_y=\emptyset$.

By Lemma \ref{lemma-rename}, $T'_a,\ldots,T'_z,T'_2$ is another chain of mergers from $T_1$ to $T_2$
\[T_1\dbhrel T'_a\dbhrel\ldots T'_z\dbhrel T'_2\rename T_2,\]
where all theories in the chain have signatures which are disjoint from the signatures of all the other theories in the chain, except for $T_1$ and $T_2$ which may have signatures which are non-disjoint.

By Theorem \ref{thm-disj-merge}, the consecutive mergers from $T_1$ to $T'_2$ can be compressed into one merger. So $T_1 \dbhrel T'_2 \rename T_2$ and this is what we wanted to prove.

\begin{sloppypar}
To show the converse direction, let us assume that $T_1$ and $T_2$ are such theories that there is a disjoint renaming theory $T'_{2}$ of $T_2$ for which $T_1\bhrel T'_{2}$. As $T'_{2}$ is a disjoint renaming of $T_{2}$, we have by Remark \ref{rem-rename} that $T'_{2}\dbhrel T_2$. Therefore, there is a chain $T^+$, $T^{\times}$ of theories such that ${T_1\eqarrowright T^+\eqarrowleft T'_{2} \eqarrowright T^{\times}\eqarrowleft T_2}$. Hence $T_1\defequiv T_2$.\qedhere
\end{sloppypar}
\end{proof}

\begin{cor}\label{cor:twostep}
Two theories are definitionally equivalent iff they can be connected by two definitional mergers:
$$T_1\defequiv T_2 \iff \exists T(T_1 \dbhrel T \dbhrel T_2).$$
Consequently, the chain $T_1,\ldots, T_n$ in Definition~\ref{def-defeq} can allways be choosed to be at most length four.
  \end{cor}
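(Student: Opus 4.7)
The plan is to obtain this corollary as a packaging of Theorem \ref{thm:renaming}, using Remark \ref{rem-rename} to absorb the renaming step into a disjoint merger, together with the trivial fact (Remark \ref{rem:special}) that a disjoint merger is a special case of definitional equivalence. No genuinely new argument is required, since all the substance is already buried in Theorem \ref{thm:renaming}.

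First, for the forward direction, I would invoke Theorem \ref{thm:renaming}: from $T_1 \defequiv T_2$ it produces a theory $T'_2$ with $T'_2 \rename T_2$, $\Sigma_1 \cap \Sigma'_2 = \emptyset$, and $T_1 \dbhrel T'_2$. Since $T'_2$ and $T_2$ have disjoint signatures by the definition of renaming, Remark \ref{rem-rename} upgrades the renaming $T'_2 \rename T_2$ to $T'_2 \dbhrel T_2$. Setting $T := T'_2$ then yields the desired two-step chain $T_1 \dbhrel T \dbhrel T_2$.

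For the converse I would simply chain: each $\dbhrel$-step is a special case of $\bhrel$ and hence, by Remark \ref{rem:special}, of $\defequiv$; the two resulting $\defequiv$-steps then compose by transitivity of $\defequiv$ (Theorem \ref{thm:equivrel}) into $T_1 \defequiv T_2$.

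For the length claim I would unfold each of the two disjoint mergers via Definition \ref{def-bhrel} into an $\eqarrowright\eqarrowleft$-pair, obtaining the explicit chain
\[T_1 \eqarrowright T^+ \eqarrowleft T \eqarrowright T^{\times} \eqarrowleft T_2,\]
which uses four transition arrows in the sense of Definition \ref{def-defeq}. I do not anticipate any real obstacle here: the only heavy lifting has already been done in Theorem \ref{thm:renaming}, and the rest is pure bookkeeping.
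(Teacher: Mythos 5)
Your proposal is correct and matches the paper's own argument, which likewise derives the corollary immediately from Theorem \ref{thm:renaming} together with Remark \ref{rem-rename}; you have merely spelled out the bookkeeping (including unfolding the two mergers into the chain $T_1 \eqarrowright T^+ \eqarrowleft T \eqarrowright T^{\times} \eqarrowleft T_2$) that the paper leaves implicit.
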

\begin{proof}
This follows immediately from Theorem \ref{thm:renaming} and Remark \ref{rem-rename}.\qedhere
\end{proof}

\begin{thm}\label{thm-closure}
Definitional equivalence is the finest equivalence relation containing definitional mergeability. In fact  $\defequiv$ is the transitive closure of relation $\bhrel$.
\end{thm}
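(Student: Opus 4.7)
The plan is to split the theorem into two statements and handle them in turn. First, I would show that $\defequiv$ is an equivalence relation containing $\bhrel$ and is smallest with this property. The equivalence-relation half is already Theorem~\ref{thm:equivrel}, and the inclusion $\bhrel\,\subseteq\,\defequiv$ is Remark~\ref{rem:special}. For the minimality/finest part, I take an arbitrary equivalence relation $\sim$ on theories with $\bhrel\,\subseteq\,\sim$ and aim to show $\defequiv\,\subseteq\,\sim$. The key tool is Lemma~\ref{lemma-merge}: whenever $T_1\defequiv T_2$, there is a finite chain
\[T_1 \bhrel T_a \bhrel \ldots \bhrel T_z \bhrel T_2.\]
Every consecutive pair in this chain lies in $\bhrel$, hence in $\sim$, and then transitivity of $\sim$ delivers $T_1\sim T_2$.

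Second, for the transitive-closure statement, I would first observe that $\bhrel$ is already reflexive (take the common extension to be $T$ itself, with empty sets of new explicit definitions) and symmetric (immediate from Definition~\ref{def-bhrel}, since the roles of $T_1$ and $T_2$ in a common extension $T^+$ are interchangeable). Hence its transitive closure is automatically an equivalence relation containing $\bhrel$, so no additional reflexive/symmetric closure is needed. By the finest-claim part just proved, $\defequiv$ is contained in this transitive closure. Conversely, whenever $T_1$ and $T_2$ are connected by a chain of $\bhrel$-steps, Remark~\ref{rem:special} upgrades each step to a $\defequiv$-step, and transitivity of $\defequiv$ (Theorem~\ref{thm:equivrel}) collapses the chain to $T_1\defequiv T_2$. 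Thus the two relations coincide.

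There is essentially no hard step here: once Lemma~\ref{lemma-merge} is in hand, the whole argument is a clean assembly of Theorem~\ref{thm:equivrel}, Remark~\ref{rem:special}, and Lemma~\ref{lemma-merge}. The only small conceptual point worth flagging is that reflexivity and symmetry of $\bhrel$ must be verified before one can assert that its \emph{transitive} closure (as opposed to its reflexive-symmetric-transitive closure) is already an equivalence relation; both properties are straightforward from the definition. Note also that Corollary~\ref{cor:twostep} gives the stronger information that the chain can always be taken to have length at most four, but for the present theorem only the finiteness of the chain supplied by Lemma~\ref{lemma-merge} is needed.
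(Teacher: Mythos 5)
Your argument is correct, and its overall decomposition (two containments, plus the observation that $\bhrel\subseteq\defequiv$ via Remark~\ref{rem:special} and that $\defequiv$ is an equivalence relation via Theorem~\ref{thm:equivrel}) matches the paper's. The genuine difference is in the key direction, namely that $\defequiv$ is contained in the transitive closure of $\bhrel$: you invoke Lemma~\ref{lemma-merge}, which turns the defining chain of Definition~\ref{def-defeq} into a finite chain of mergers by padding with trivial extensions, whereas the paper invokes Theorem~\ref{thm:renaming} together with Remark~\ref{rem-rename} to produce a chain of length two, $T_1\bhrel T'\bhrel T_2$. Your route is the more elementary one, since Theorem~\ref{thm:renaming} rests on the whole disjoint-renaming machinery (Lemma~\ref{lemma-rename} and Theorem~\ref{thm-disj-merge}); what the paper's choice buys is the quantitative refinement that two merger steps always suffice, but that extra information is not needed for Theorem~\ref{thm-closure} and is recorded separately in Corollary~\ref{cor:twostep} anyway. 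You are also more explicit than the paper on two points it leaves implicit: the verification that \emph{every} equivalence relation containing $\bhrel$ must contain $\defequiv$ (which is what ``finest'' literally asserts), and the check that $\bhrel$ is reflexive and symmetric, so that its \emph{transitive} closure --- rather than its full reflexive--symmetric--transitive closure --- is already an equivalence relation. Both points are correct and worth spelling out.
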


\begin{proof}
From Remark~\ref{rem:special}, we know that $\defequiv$ is an extension of $\bhrel$. To prove that $\defequiv$ is the transitive closure of $\bhrel$, it is enough to show that $T_1\defequiv T_2$ holds if there is a chain $T_1',\ldots, T_n'$ of theories such that $T_1=T_1'$, $T_2=T_n'$, and $T_i' \bhrel T_{i+1}'$ for all $1 \leq i < n$. By Theorem~\ref{thm:renaming}, there is a theory $T'$  such that $T_1 \bhrel T'\rename T_2$. By Remark~\ref{rem-rename}, $T_1 \bhrel T'\bhrel T_2$ which proves our statement.\qedhere
\end{proof}

It is known that, for languages with disjoint signatures, being definitionally mergeable and intertranslatability are equivalent, see e.g., \citep[Theorems 1 and 2]{Glymour}.
Now we show that, for languages with disjoint signatures, definitional equivalence also coincides with these concepts, i.e.:
\begin{thm} \label{disjoint-case}
Let $T$ and $T'$ be two theories formulated in languages with disjoint signatures. Then
\[T \defequiv T' \iff T \dbhrel T' \iff T \intertrans T'.\]
\end{thm}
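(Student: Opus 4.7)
The plan is to reduce this triple equivalence to results that have already been proven in the excerpt, treating it essentially as a corollary rather than an independent theorem. The implication $T \dbhrel T' \Rightarrow T \defequiv T'$ is immediate from Remark~\ref{rem:special}, which records that definitional mergeability is a special case of definitional equivalence (and disjoint mergeability $\dbhrel$ is by definition a special case of $\bhrel$). So the only nontrivial direction between $\defequiv$ and $\dbhrel$ is the forward one.

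For $T \defequiv T' \Rightarrow T \dbhrel T'$, I would apply Theorem~\ref{thm:renaming} to obtain an auxiliary theory $T^{*}$ with $T \dbhrel T^{*}$ and $T^{*} \rename T'$. This delivers disjoint definitional mergeability with a renaming of $T'$, but I still need mergeability with $T'$ itself. This is exactly the situation handled by the first item of Lemma~\ref{lemma-rename}: setting $T_a := T$, $T_b := T^{*}$, and $T'_b := T'$, the hypotheses $T_a \bhrel T_b$ (which follows from $T \dbhrel T^{*}$) and $T_b \rename T'_b$ are met, and the additional condition $\Sigma_a \cap \Sigma'_b = \emptyset$ is precisely the hypothesis of the theorem, namely that $T$ and $T'$ have disjoint signatures. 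The conclusion of the lemma is exactly $T \dbhrel T'$, closing the loop.

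For the remaining equivalence $T \dbhrel T' \iff T \intertrans T'$, I would simply invoke the known result: as recalled in the paragraph preceding the theorem, for disjoint signatures this equivalence is \citep[Theorems~1 and~2]{Glymour}. So the proof consists of citing those results together with the two implications established above, concluding the three-way equivalence by transitivity of $\iff$.

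There is no real obstacle here; the work was already done in Theorem~\ref{thm:renaming} and Lemma~\ref{lemma-rename}. The only thing to watch is that the disjointness hypothesis must be used at exactly one place, namely to feed the side condition $\Sigma_a \cap \Sigma'_b = \emptyset$ into Lemma~\ref{lemma-rename}; without it, Theorem~\ref{thm:renaming} alone only yields $T \dbhrel T^{*}$ for some renaming $T^{*}$ of $T'$, which is strictly weaker than $T \dbhrel T'$ in general.
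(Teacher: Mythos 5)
Your proof is correct and follows essentially the same route as the paper's: both delegate the equivalence $T \dbhrel T' \iff T \intertrans T'$ to \citep[Theorems 1 and 2]{Glymour}, obtain $T \dbhrel T^{*} \rename T'$ from Theorem~\ref{thm:renaming}, and dispose of the converse direction as immediate from the definitions. The only divergence is in how the final renaming step is absorbed: the paper notes via Remark~\ref{rem-rename} that $T^{*} \rename T'$ is itself a disjoint merger and then compresses the two consecutive disjoint mergers using Theorem~\ref{thm-disj-merge}, whereas you transport the merger $T \dbhrel T^{*}$ along the renaming using the first item of Lemma~\ref{lemma-rename}; both variants invoke the hypothesis $\Sigma \cap \Sigma' = \emptyset$ at exactly one analogous point, so either closing move is sound.
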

\begin{proof}
Since $T \dbhrel T' \iff T \intertrans T'$ is proven by \citep[Theorems 1 and 2]{Glymour}, we only have to prove that $T \defequiv T' \iff T \dbhrel T'$.

Let theories $T$ and $T'$ be definitionally equivalent theories with disjoint signatures $\Sigma \cap \Sigma' = \emptyset$. Since they are definitionally equivalent, there exists, by Theorem \ref{thm:renaming} a chain which consists of a single mergeability and a renaming step between $T$ and $T'$. Since $T$ and $T'$ are disjoint, and since renaming by Remark \ref{rem-rename} is also a disjoint merger, these two steps can by Theorem \ref{thm-disj-merge} be reduced to one step $T \dbhrel T'$, and this is what we wanted to prove.

The converse direction follows straightforwardly from the definitions. \qedhere
\end{proof}

\begin{thm}\label{thm:interdisjoined}
Let $T_1$ and $T_2$ be arbitrary theories, then $T_1$ and $T_2$ are mergeable iff they are model mergeable, i.e.,
$$T_1 \bhrel T_2 \Longleftrightarrow Mod(T_1) \bhrel Mod(T_2)$$
\end{thm}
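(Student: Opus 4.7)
The plan is to use Lemma \ref{lem-bhdef} to reformulate $T_1\bhrel T_2$ as the existence of explicit-definition sets $\Delta_{12}$ and $\Delta_{21}$ with $T_1\cup\Delta_{12}\equiv T_2\cup\Delta_{21}$, and then to obtain the bijection of Definition \ref{def-interdisj} by expanding a model along $\Delta_{12}$ and taking the reduct to $\Sigma_2$. The same $\Delta_{12}$ and $\Delta_{21}$ will witness both sides of the equivalence, so the two implications become essentially dual.

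For ($\Rightarrow$), given $\mathfrak{M}\in Mod(T_1)$, I construct the unique expansion $\mathfrak{M}^+$ of $\mathfrak{M}$ to signature $\Sigma_1\cup\Sigma_2$ by setting $p^{\mathfrak{M}^+}\de\|\varphi_p\|^{\mathfrak{M}}$ for each $p\in\Sigma_2\setminus\Sigma_1$, where $\varphi_p$ is the defining formula of $p$ in $\Delta_{12}$. By construction $\mathfrak{M}^+\models T_1\cup\Delta_{12}$, hence $\mathfrak{M}^+\models T_2\cup\Delta_{21}$ by the hypothesis. I define $\beta(\mathfrak{M})\de\mathfrak{M}^+|_{\Sigma_2}\in Mod(T_2)$, which has the same underlying set as $\mathfrak{M}$, and define $\beta^{-1}$ symmetrically via $\Delta_{21}$. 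To verify these maps are mutually inverse, I use that $\mathfrak{M}^+\models\Delta_{21}$ forces $q^{\mathfrak{M}^+}=\|\varphi_q\|^{\mathfrak{M}^+|_{\Sigma_2}}$ for each $q\in\Sigma_1\setminus\Sigma_2$, because $\varphi_q\in\mathcal{L}_2$; thus re-expanding $\beta(\mathfrak{M})$ along $\Delta_{21}$ rebuilds $\mathfrak{M}^+$, and reducing to $\Sigma_1$ returns $\mathfrak{M}$. Both clauses of Definition \ref{def-interdisj} are immediate from this construction.

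For ($\Leftarrow$), let $\beta$, $\Delta_{12}$, $\Delta_{21}$ witness $Mod(T_1)\bhrel Mod(T_2)$; I establish $T_1\cup\Delta_{12}\equiv T_2\cup\Delta_{21}$. Take $\mathfrak{N}\models T_1\cup\Delta_{12}$ and set $\mathfrak{M}\de\mathfrak{N}|_{\Sigma_1}\models T_1$. The two structures $\mathfrak{N}|_{\Sigma_2}$ and $\beta(\mathfrak{M})$ share the same underlying set and agree on $\Sigma_2\setminus\Sigma_1$ (both interpret $p$ as $\|\varphi_p\|^{\mathfrak{M}}$, by $\Delta_{12}$ on one side and by the defining property of $\beta$ on the other) and on $\Sigma_1\cap\Sigma_2$ (both inherit from the common reduct $\mathfrak{M}$), so $\mathfrak{N}|_{\Sigma_2}=\beta(\mathfrak{M})\models T_2$ and hence $\mathfrak{N}\models T_2$. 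That $\mathfrak{N}\models\Delta_{21}$ follows from the second clause of Definition \ref{def-interdisj} applied to $\mathfrak{M}$ and $\beta(\mathfrak{M})=\mathfrak{N}|_{\Sigma_2}$. The reverse implication $\mathfrak{N}\models T_2\cup\Delta_{21}\Rightarrow\mathfrak{N}\models T_1\cup\Delta_{12}$ is entirely symmetric using $\beta^{-1}$, and $T_1\bhrel T_2$ then follows from Lemma \ref{lem-bhdef}.

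The main obstacle, as in the proof of Theorem \ref{thm-not-transitive}, is the non-disjoint part of the signatures. Unlike the disjoint case, I have to separately verify that symbols in $\Sigma_1\cap\Sigma_2$, which are outside the scope of both $\Delta_{12}$ and $\Delta_{21}$, receive matching interpretations under $\beta$, and that the $\Delta_{21}$-definitions of $q\in\Sigma_1\setminus\Sigma_2$, being formulas of $\mathcal{L}_2$, depend only on the $\Sigma_2$-reduct of $\mathfrak{M}^+$. Once this bookkeeping is settled, the rest reduces to the familiar expansion-and-reduct manipulation of definitional extensions.
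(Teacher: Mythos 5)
Your proposal is correct and follows essentially the same route as the paper's proof: both directions go through Lemma \ref{lem-bhdef}, with $\beta$ obtained by expanding a model of $T_1$ along $\Delta_{12}$ and reducting to $\Sigma_2$, and the converse established by showing $Mod(T_1\cup\Delta_{12})=Mod(T_2\cup\Delta_{21})$. Your treatment is somewhat more explicit than the paper's about the symbols in $\Sigma_1\cap\Sigma_2$ and about why the $\Delta_{21}$-definitions depend only on the $\Sigma_2$-reduct, but this is added care within the same argument, not a different one.
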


\begin{proof} Let $T_1$ and $T_2$ be arbitrary theories.

\begin{sloppypar}
Let us first assume that $T_1 \bhrel T_2$ and prove that $Mod(T_1) \bhrel Mod(T_2)$. We know from Lemma \ref{lem-bhdef} that there exist sets of explicit definitions $\Delta_{12}$ and $\Delta_{21}$ such that $T_{1}\cup \Delta_{12}\equiv T_{2}\cup\Delta_{21}$. Therefore, by Definition \ref{def-theoryequiv}, ${Mod(T_{1}\cup \Delta_{12}) = Mod(T_{2}\cup\Delta_{21})}$.
We construct map $\beta$ between $Mod(T_1)$ and $Mod(T_2)$ by extending models of $T_1$ with the explicit definitions in $\Delta_{12}$, which since $Mod(T_{1}\cup \Delta_{12}) = Mod(T_{2}\cup\Delta_{21})$ will be a model of $T_{1}\cup \Delta_{12}$, and then by taking the reduct to the language of $T_2$. The inverse map $\beta^{-1}$ can be constructed in a completely analogous manner. $\beta$ is a bijection since it has an inverse defined for every model of $T_2$. Through this construction, the relations in $\beta(\mathfrak{M})$ are the ones defined in $\mathfrak{M}$ according to $\Delta_{12}$ and vice versa, the relations in $\mathfrak{M}$ are the ones defined in $\beta(\mathfrak{M})$ according to $\Delta_{21}$, and clearly the underlying set of $\mathfrak{M}$ and $\beta(\mathfrak{M})$ are the same. Hence $Mod(T_1) \bhrel Mod(T_2)$.
\end{sloppypar}

Let us now assume that $Mod(T_1) \bhrel Mod(T_2)$ and prove that $T_1 \bhrel T_2$. We know by Definition \ref{def-interdisj} that there is a bijection $\beta$ between $Mod(T_1)$ and $Mod(T_2)$ that is defined along two sets $\Delta_{12}$ and $\Delta_{21}$  of explicit definitions such that if $\mathfrak{M}\in Mod(T_1)$, then
\begin{itemize}
\item  the underlying set of $\mathfrak{M}$ and $\beta(\mathfrak{M})$ are the same,
\item  the relations in $\beta(\mathfrak{M})$ are the ones defined in $\mathfrak{M}$ according to $\Delta_{12}$ and vice versa, the relations in $\mathfrak{M}$ are the ones defined in $\beta(\mathfrak{M})$ according to $\Delta_{21}$.
\end{itemize}
Any model of both $T_{1}\cup\Delta_{12}$ and $T_{2}\cup\Delta_{21}$ can be obtained by listing the relations of $\mathfrak{M}$ and $\beta(\mathfrak{M})$ together over the common underlying set $M$. Therefore, $Mod(T_{1}\cup \Delta_{12}) = Mod(T_{2}\cup\Delta_{21})$, and thus by Definition \ref{def-theoryequiv}, $T_{1}\cup \Delta_{12}\equiv T_{2}\cup\Delta_{21}$. Consequently, $T_1 \bhrel T_2$.\qedhere
\end{proof}

\begin{thm}\label{thm:equiv}
Let $T_1$ and $T_2$ be arbitrary theories. Then $T_1$ and $T_2$ are definitionally equivalent iff they are intertranslatable, i.e.,  
\[T_1 \defequiv T_2 \iff T_1 \intertrans T_2.\]
\end{thm}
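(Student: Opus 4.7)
The plan is to reduce the general statement to the disjoint-signature case settled in Theorem~\ref{disjoint-case}, using a disjoint renaming as a bridge and Theorem~\ref{thm:renaming} to translate between disjoint mergeability and definitional equivalence.

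For the forward direction ($T_1 \defequiv T_2 \Rightarrow T_1 \intertrans T_2$), I would apply Theorem~\ref{thm:renaming} to obtain a theory $T'_2$ with $T'_2 \rename T_2$, $\Sigma_1 \cap \Sigma'_2 = \emptyset$, and $T_1 \dbhrel T'_2$. Since the signatures of $T_1$ and $T'_2$ are disjoint, Theorem~\ref{disjoint-case} yields $T_1 \intertrans T'_2$, and Remark~\ref{rem-rename} gives $T'_2 \intertrans T_2$. Composing these two intertranslations (using the fact, cited from \citep[Theorem 1, p.~7]{diss}, that intertranslatability is transitive) produces $T_1 \intertrans T_2$.

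For the converse direction ($T_1 \intertrans T_2 \Rightarrow T_1 \defequiv T_2$), let $tr_{12}$ and $tr_{21}$ witness $T_1 \intertrans T_2$. I would first pick a disjoint renaming $T'_2 \rename T_2$ whose signature is disjoint from $\Sigma_1$, which is always possible by choosing fresh relation symbols of matching arities. Using the renaming bijection $R^{\emptyset}_{\Sigma_2\Sigma'_2}$ and its inverse, define composite maps $tr_{12'}$ and $tr_{2'1}$ between $\mathcal{L}_1$ and $\mathcal{L}'_2$ by applying the renaming to the outputs of $tr_{12}$ and to the inputs of $tr_{21}$, respectively. One then checks Definition~\ref{def-translation} clause by clause (which is routine because renaming commutes with equality, connectives, and quantifiers) and verifies that the mutual-inverse identities of Definition~\ref{def-intertrans} carry over since the renaming and its inverse cancel on compositions. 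This gives $T_1 \intertrans T'_2$, so Theorem~\ref{disjoint-case} applies to yield $T_1 \dbhrel T'_2$, and then Theorem~\ref{thm:renaming} (in its converse direction) delivers $T_1 \defequiv T_2$.

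The main obstacle is purely bookkeeping: in the converse direction one must carefully verify that \emph{intertranslatability is preserved under renaming}, i.e., that the composed maps still satisfy the translation axioms of Definition~\ref{def-translation} and the mutual-inverse condition of Definition~\ref{def-intertrans}. This is conceptually trivial but needs to be spelled out with some care since the paper has not previously stated a lemma of this form. Once it is done, the theorem follows immediately from Theorem~\ref{disjoint-case} and Theorem~\ref{thm:renaming}.
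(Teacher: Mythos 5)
Your proposal is correct and follows essentially the same route as the paper: reduce to the disjoint-signature case of Theorem~\ref{disjoint-case} by inserting a disjoint renaming $T'_2$ of $T_2$ as a bridge. The only difference is that in the converse direction the paper avoids your explicit construction of renamed translations entirely, obtaining $T_1 \intertrans T'_2$ directly from $T'_2 \intertrans T_2$ (Remark~\ref{rem-rename}) together with the transitivity of $\intertrans$, so the ``bookkeeping'' step you flag as the main obstacle is already covered by facts stated earlier in the paper.
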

\begin{proof}
Let us first assume that $T_1 \defequiv T_2$. Let $T'$ be a disjoint renaming of $T_2$ to a signature which is also disjoint from the signature of $T_1$. By Remark~\ref{rem-rename} and the transitivity of $\defequiv$, we have $T_1\defequiv T'\defequiv T_2$. By Theorem~\ref{disjoint-case}, $T_1\intertrans T'\intertrans T_2$. Consequently, $T_1 \intertrans T_2$ because relation $\intertrans$ is transitive.

To prove the converse, let us assume that $T_1 \intertrans T_2$. Let $T'$ again be a disjoint renaming of $T_2$ to a signature which is also disjoint from the signature of $T_1$. By Remark~\ref{rem-rename} and the transitivity of $\intertrans$, we have $T_1\intertrans T'\intertrans T_2$. By Theorem~\ref{disjoint-case}, $T_1\defequiv T'\defequiv T_2$. Consequently, $T_1 \defequiv T_2$ because relation $\defequiv$ is transitive.\qedhere
\end{proof}

\begin{thm}\label{thm:intertrans}
Let $T_1$ and $T_2$ be arbitrary theories, then $T_1$ and $T_2$ are intertranslatable iff their models are intertranslatable, i.e.,
$$T_1 \intertrans T_2 \Longleftrightarrow Mod(T_1) \intertrans Mod(T_2)$$
\end{thm}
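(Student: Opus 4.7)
The plan is to prove both directions by leveraging Lemma~\ref{lem-transmod}, which already connects satisfaction of a formula in $tr^*_{12}(\mathfrak{M})$ with satisfaction of its translation in $\mathfrak{M}$. The key observation is that the intertranslatability condition $T_1\models \forall\bar x[\varphi(\bar x)\leftrightarrow tr_{21}(tr_{12}(\varphi(\bar x)))]$ is essentially a formula-level reformulation of the semantic identity $tr^*_{12}\circ tr^*_{21}=\mathrm{id}_{Mod(T_1)}$, once we pass through Lemma~\ref{lem-transmod}.

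For the forward direction, assume $T_1\intertrans T_2$ with witnesses $tr_{12}$ and $tr_{21}$. By Corollary~\ref{cor:transmod}, these induce maps $tr^*_{12}:Mod(T_2)\to Mod(T_1)$ and $tr^*_{21}:Mod(T_1)\to Mod(T_2)$. Fix $\mathfrak{M}\in Mod(T_1)$ and a predicate $p\in\Sigma_1$ with arity $n$. By the definition of $tr^*_{12}$ followed by Definition~\ref{def-satisfy}, for $\bar a\in M^n$ we have $\bar a\in p^{tr^*_{12}(tr^*_{21}(\mathfrak{M}))}$ iff $tr^*_{21}(\mathfrak{M})\models tr_{12}(p)[\bar a]$, and by Lemma~\ref{lem-transmod} applied to $tr_{21}$ this is equivalent to $\mathfrak{M}\models tr_{21}(tr_{12}(p))[\bar a]$. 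The intertranslatability assumption then gives that this holds iff $\mathfrak{M}\models p[\bar a]$. Since the underlying sets agree, $tr^*_{12}(tr^*_{21}(\mathfrak{M}))=\mathfrak{M}$; the symmetric argument handles $tr^*_{21}\circ tr^*_{12}=\mathrm{id}_{Mod(T_2)}$, so both maps are mutually inverse bijections and $Mod(T_1)\intertrans Mod(T_2)$.

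For the converse, suppose the induced maps $tr^*_{12}$ and $tr^*_{21}$ are mutually inverse bijections. Take any formula $\varphi(\bar x)$ of $\mathcal{L}_1$, any $\mathfrak{M}\in Mod(T_1)$, and any tuple $\bar a$ from $M$. Then
\[ \mathfrak{M}\models\varphi[\bar a] \iff tr^*_{12}(tr^*_{21}(\mathfrak{M}))\models\varphi[\bar a] \iff tr^*_{21}(\mathfrak{M})\models tr_{12}(\varphi)[\bar a] \iff \mathfrak{M}\models tr_{21}(tr_{12}(\varphi))[\bar a], \]
where the first equivalence uses that $tr^*_{12}\circ tr^*_{21}$ is the identity on $Mod(T_1)$ and the last two use Lemma~\ref{lem-transmod}. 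Thus $T_1\models\forall\bar x[\varphi(\bar x)\leftrightarrow tr_{21}(tr_{12}(\varphi(\bar x)))]$, and the symmetric argument yields the analogous statement for formulas of $\mathcal{L}_2$, establishing $T_1\intertrans T_2$.

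There is no serious obstacle here beyond careful bookkeeping: Lemma~\ref{lem-transmod} does the heavy lifting, and the only subtlety is keeping track of the direction in which $tr_{ij}$ translates formulas (from $\mathcal{L}_i$ to $\mathcal{L}_j$) versus the opposite direction in which $tr^*_{ij}$ sends models (from $Mod(T_j)$ to $Mod(T_i)$, as noted in Remark~\ref{rem-otherwayround}). One minor cosmetic point is that the syntactic equivalence in Definition~\ref{def-intertrans} concerns formulas with free variables rather than sentences, which is handled automatically since we verify it at every tuple $\bar a$ in every model of $T_1$.
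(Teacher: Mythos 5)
Your proof is correct and follows essentially the same route as the paper's: both directions reduce to the identity $tr^*_{12}\circ tr^*_{21}=\mathrm{id}$ via a double application of Lemma~\ref{lem-transmod}, the only cosmetic differences being that you verify model equality on atomic predicates rather than on all formulas and that you leave the inconsistent (empty model class) case to be absorbed vacuously instead of treating it separately as the paper does. No gaps.
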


\begin{proof} Let $T_1$ and $T_2$ be arbitrary theories. If $T_1$ or $T_2$ is inconsistent, then they are by Remark \ref{rem-inconsistent} both inconsistent, $Mod(T_1)$ and $Mod(T_2)$ are empty classes, and the theorem is trivially true. Let's now for the rest of the proof assume that both $T_1$ and $T_2$ are consistent theories and hence that both $Mod(T_1)$ and $Mod(T_2)$ are not empty.

Let us first assume that $T_1 \intertrans T_2$ and prove that $Mod(T_1) \intertrans Mod(T_2)$, i.e., that there exist $tr^*_{12}:Mod(T_2)\to Mod(T_1)$ and $tr^*_{21}:Mod(T_1)\to Mod(T_2)$ which are bijections and which are inverses of each other.

Let $\mathfrak{M}$ be a model of $T_1$, then
$$\mathfrak{M} \models \forall x_1 \ldots \forall x_n \big[\varphi(x_1 , \ldots , x_n ) \leftrightarrow tr_{21}\big(tr_{12} \big(\varphi(x_1 ,\ldots , x_n )\big)\big)\big].$$

By Definition \ref{semantics} and Remark \ref{rem-abbr}, this is equivalent to
$$\mathfrak{M} \models \varphi[e] \iff \mathfrak{M} \models tr_{21}(tr_{12}(\varphi))[e]$$ for all evaluations $e:V\to M$.

By applying Lemma \ref{lem-transmod} twice,
$$\mathfrak{M}\models tr_{21}(tr_{12}(\varphi))[e] \iff tr^*_{21}(\mathfrak{M})\models tr_{12}(\varphi)[e] \iff tr^*_{12}(tr^*_{21}(\mathfrak{M}))\models \varphi[e].$$

Consequently,
$$\mathfrak{M}\models \varphi[e] \iff tr^*_{12}(tr^*_{21}(\mathfrak{M}))\models \varphi[e].$$

Since $M$ is the underlying set of both $\mathfrak{M}$ and $tr^*_{12}(tr^*_{21}(\mathfrak{M}))$, this implies that $\mathfrak{M}=tr^*_{12}(tr^*_{21}(\mathfrak{M}))$.

A completely analogous proof shows that $\mathfrak{N}=tr^*_{21}(tr^*_{12}(\mathfrak{N}))$ for all models $\mathfrak{N}$ of $T_2$.

Consequently, $tr^*_{12}$ and $tr^*_{21}$ are everywhere defined and they are inverses of each other because when we combine them we get the identity, and hence they are bijections, which is what we wanted to prove.

Let us now assume that $Mod(T_1) \intertrans Mod(T_2)$ and prove that $T_1 \intertrans T_2$. By Definition \ref{def-intermod}, we know  that there are bijections $tr^*_{12}$ and $tr^*_{21}$ which are inverses of each other, and thus $\mathfrak{M}=tr^*_{12}(tr^*_{21}(\mathfrak{M}))$ for all models $\mathfrak{M}$ of $T_1$. Since $M$ is the underlying set of $\mathfrak{M}$, and $tr^*_{12}(tr^*_{21}(\mathfrak{M}))$, we have that 
$$\mathfrak{M}\models \varphi[e] \iff tr^*_{12}(tr^*_{21}(\mathfrak{M}))\models \varphi[e].$$ 
From this, by applying Lemma \ref{lem-transmod} twice, we get
$$\mathfrak{M} \models \varphi[e] \iff \mathfrak{M}\models tr_{21}(tr_{12}(\varphi))[e].$$
for all evaluations $e:V\to M$.
By Definition \ref{semantics} and Remark \ref{rem-abbr}, this is equivalent to
$$\mathfrak{M} \models \forall x_1 \ldots \forall x_n \big[\varphi(x_1 , \ldots , x_n ) \leftrightarrow tr_{21}\big(tr_{12} \big(\varphi(x_1 ,\ldots , x_n )\big)\big)\big].$$
A completely analogous proof shows that
$$\mathfrak{N} \models \forall x_1 \ldots \forall x_n \big[\psi(x_1 , \ldots , x_n ) \leftrightarrow tr_{12}\big(tr_{21} \big(\psi(x_1 ,\ldots , x_n )\big)\big)\big],$$
from which follows by Definition \ref{def-intertrans} that $T_1 \intertrans T_2$.\qedhere
\end{proof}

\begin{rem}\label{rem-final}
If we use the notations of this paper,  Theorem 4.2 of \citep{definability} claims, without proof, that (i) definitional equivalence, (ii) definitional mergeability, (iii) intertranslatability and (iv) model mergeability are equivalent in case of disjoint signatures. In this paper, we have not only proven these statements, but we also showed which parts can be generalized to arbitrary languages and which cannot. In detail:
\begin{itemize}
\item item (i) is equivalent to item (iii) by Theorem \ref{disjoint-case}, and we have generalized this equivalence to theories in arbitrary languages by Theorem \ref{thm:equiv},
\item the equivalence of items (ii) and (iv) have been generalized to theories in arbitrary languages by Theorem \ref{thm:interdisjoined},
\item  items (i) and (ii) are indeed equivalent for theories with disjoint signatures by Theorem \ref{disjoint-case}; however, they are not equivalent for theories with non-disjoint signatures by the counterexample in Theorem \ref{thm-not-transitive},
\item  in Definition \ref{def-intermod}, we have introduced a model theoretic counterpart of intertanslatability which, by Theorem \ref{thm:intertrans}, is equivalent to it even for arbitrary languages.
\end{itemize}
\end{rem}

\section{Conclusion}
\label{sec:3}
Since definitional mergeability is not transitive, by Theorem \ref{thm-not-transitive}, and thus not an equivalence relation, the Barrett--Halvorson generalization is not a well-founded criterion for definitional equivalence when the signatures of theories are not disjoint. Contrary to this, the Andr\'eka--N\'emeti generalization of definitional equivalence is an equivalence relation, by Theorem \ref{thm:equivrel}. It is also equivalent to intertranslatability, by Theorem \ref{thm:equiv}, and to model-intertranslatability, by Theorem \ref{thm:intertrans}, even for languages with non-disjoint signatures. Therefore, the Andr\'eka--N\'emeti generalization is more suitable to be used as the extension of definitional equivalence between theories of arbitrary languages. It is worth noting, however, that the two generalizations are really close to each-other since the Andr\'eka--N\'emeti generalization is the transitive closure of the Barrett-Halvorson one, see Theorem \ref{thm-closure}. Moreover, they only differ in at most one disjoint renaming, see Theorems \ref{thm:renaming} and  \ref{disjoint-case}, and as long as we restrict ourselves to theories which all have mutually disjoint signatures, Barrett--Halvorson's definition is transitive by Theorem~\ref{thm-disj-merge}.

\subsection*{Acknowledgements}
The writing of the current paper was induced by questions by Marcoen Cabbolet and Sonja Smets during the public defence of \citep{diss}. We are also grateful to Hajnal Andr\'eka, Mohamed Khaled, Amed\'e Lefever, Istv\'an N\'emeti and Jean Paul Van Bendegem for enjoyable discussions and feedback while writing this paper.



\bibliographystyle{agsm}
\bibliography{LogRel12017} 




\begin{flushright}
KOEN LEFEVER\\
Centre for Logic and Philosophy of Science\\
Vrije Universiteit Brussel\\
\href{mailto:koen.lefever@vub.ac.be}{koen.lefever@vub.be}\\
\url{http://homepages.vub.ac.be/~kolefeve/}

\vspace{.7cm}

GERGELY SZ{\' E}KELY\\
MTA Alfr{\' e}d R{\' e}nyi Institute for Mathematics\\
\href{mailto:szekely.gergely@renyi.mta.hu}{szekely.gergely@renyi.mta.hu}\\
\url{http://www.renyi.hu/~turms/}
\end{flushright}

\end{document}